\DeclareMathOperator{\spn}{span}
\DeclareMathOperator{\sign}{sign}
\newtheorem{theorem}{Theorem}[section]
\newtheorem{corollary}{Corollary}[section]
\newtheorem{lemma}{Lemma}[section]
\newtheorem{remark}{Remark}[section]
\newtheorem{prop}{Proposition}[section]
\newcommand{\RR}{\mathbb R}
\newcommand{\A}{\mathcal A}
\newcommand{\B}{\mathcal B}
\newcommand{\D}{\mathcal D}
\newcommand{\ZZ}{\mathcal Z}
\newcommand{\const}{\operatorname{const}}
\newcommand{\qed}{{}\hfill $\square$ \\}
\newenvironment{proof}{{\bf Proof.}}{\qed}
\begin{document}

\title{Geometric aspects of two- and threepeakons}
\author{Tomasz Cie\'slak\footnote{cieslak@impan.pl}\\
{\small Institute of Mathematics, Polish Academy of Sciences,}\\
{\small Warszawa, Poland}
\and
Wojciech Kry\'nski,\footnote{krynski@impan.pl}\\
{\small Institute of Mathematics, Polish Academy of Sciences,}\\
{\small Warszawa, Poland}}

\date{}
\maketitle
\begin{abstract}
We apply geometric tools to study dynamics of two- and threepeakon solutions of the Camassa--Holm equation.
New proofs of asymptotic behavior of the solutions are given. In particular we recover well-known collision conditions.
Additionally the Gauss curvature (in the twopeakon case) and the sectional curvature (in the treepeakon case) of corresponding manifolds are computed.
\end{abstract}

\noindent
\textbf{MSC2020:} 53C22, 37J39, 70H06\\
\textbf{Keywords:} multipeakon, Camassa--Holm equation, curvature

\section{Introduction}\label{sec:intro}
This paper is devoted to the study of multipeakons which are particular solutions of the Camassa--Holm equation
\begin{equation}\label{eq:CH}
u_t-u_{xxt}+3uu_x-2u_xu_{xx}-uu_{xxx}=0.
\end{equation}
The equation was introduced in \cite{CH,CHH} as a model for waves in shallow water and it plays a fundamental role in the theory of integrable equations (see \cite{BSS,C1,C2,CEsch,KM}).

A multipeakon, or an $n$-peakon, is a function of the following form
\begin{equation}\label{eq:multi}
u(x,t)=\sum_{i=1}^{n}p_i(t)e^{-|x-q_i(t)|}.
\end{equation}
It is a weak solution of the Camassa--Holm equation provided $p_i(t)$ and $q_i(t)$ evolve accordingly to the Hamiltonian system
\begin{equation}\label{eq:hamilton}
\dot{q_i}=\frac{\partial H}{\partial p_i}, \qquad \dot{p_i}=-\frac{\partial H}{\partial q_i}, \qquad i=1,\ldots,n,
\end{equation}
where $p=(p_1,\ldots,p_n)$, $q=(q_1,\ldots,q_n)$ and the Hamiltonian function $H$ is given by the following formula
\[
H(p,q)=\frac{1}{2}\sum_{i,j=1}^np_ip_je^{-|q_i-q_j|}.
\]
Therefore there is a one to one correspondence between the $n$-peakon solutions of \eqref{eq:CH} and solutions of \eqref{eq:hamilton}.

Notice that the Hamiltonian is a quadratic form that can be written as $H(p,q)=\frac{1}{2}\langle E(q)p,p\rangle$,   where $E=(E_{ij})$ is a symmetric matrix with $q$-dependent entries $E_{ij}(q)=e^{-|q_i-q_j|}$. One can prove that matrix $E(q)$ is positive definite for all $q$ satisfying $q_i\neq q_j$ for $i\neq j$. Consequently, the field of the inverse matrices $q\mapsto g(q)$, where
\[
g(q)=E(q)^{-1},
\]
defines a Riemannian metric on an open subset of $\RR^n$  (the hyperplanes $q_i= q_j$, $i\neq j$ can be considered as singular points of the metric). Further, $H$ can be interpreted as the Hamiltonian function of metric $g$ and, as a result, there is a one to one correspondence between $n$-peakons and geodesics of the metric. This geometric viewpoint has been already introduced in \cite{CH} and provides a very convenient framework to study multipeakons. However, most of the papers concentrate on the analytic properties of the Hamiltonian system \eqref{eq:hamilton} neglecting the underlying geometry. Our goal in this paper is to fill this gap and apply geometric methods to study asymptotic behavior of the multipeakons. It is a continuation of our previous works \cite{CGKM} and \cite{K}. In particular we refer to \cite{CGKM} for an explicit formula for $g$.

A multipeakon $u(x,t)$ given by formula \eqref{eq:multi} collides at time $t^*>0$ if $q_i(t^*)=q_j(t^*)$ for some distinct $i$ and $j$. If it is the case then $t^*$ is referred to as a collision time for $u$ and $q^*=q(t^*)$ is referred to as the corresponding collision point. Note that at a collision time an $n$-peakon becomes an $(n-1)$-peakon. From the geometric viewpoint a collision occurs when a geodesic of $g$ hits one of the hyperplanes $q_i=q_j$, i.e. it approaches a singular point of metric $g$.

The studies on the dynamics of multipeakons were initiated already in \cite{CH}, while in \cite{BSS} very complex results concerning the collisions were given.  In particular it is proved that a multipeakon \eqref{eq:multi} collides in a finite time if and only if there exists $i\in \{1,\ldots,n-1\}$ such that $q_i<q_{i+1}$ and at the same time $p_i>0$ and $p_{i+1}<0$. Similar results, using different methods, were provided in \cite{HR1, HR2}. Moreover, in \cite{GH}  very precise results concerning asymptotics of a twopeakon are stated. The problem of a prolongation of a solution after a collision time has been studied by many authors. The results split into two main branches depending on the class of solutions: dissipative or conservative (see \cite{BC,EG,HR1,HR2}).

The present paper carries on studies of multipeakons exploiting the geometric approach of \cite{CGKM,K}. We concentrate on two- and threepeakons and provide new proofs of the upper mentioned result concerning collisions (and their lack). Our idea is to utilize solely geometric properties of the system  and to exploit tools of differential geometry. The tools are briefly described in the following Section \ref{sec:pre}. Then, in Sections \ref{sec:2D} and \ref{sec:3D}, respectively, we analyze the two- and threepeakons. Moreover, we present new quantitative estimates for twopeakons basing on first integrals of the system. This is the content of Section \ref{sec:2Das}.

Additional outcome of our study is an explicit formula for the curvature of $g$ (the Gauss curvature in the case of twopeakons and the sectional curvature in the case of threepeakons). The curvature is used in the present paper to analyze asymptotic behavior of geodesics of $g$. A surprising phenomenon occurs: the curvature is not of a constant sign (which makes the analysis more difficult). On the other hand we prove it is bounded in a neighborhood of the singular set of $g$ and decays to zero at infinity (which reflects the fact that $g$ is close to the standard Euclidean metric at infinity).

\section{Preliminaries}\label{sec:pre}

In this section we shall recall basic properties of the Hamiltonian system \eqref{eq:hamilton} and metric $g$. We start by fixing our notation.

Recall that we are dealing with $\RR^n$ equipped with linear coordinates $(q_1,\ldots,q_n)$ and metric $g$. In the coordinate system $g$ is written as
$$
g=\sum_{i,j=1}^ng_{ij}dq_idq_j
$$
where $(g_{ij})$ is the inverse of $(E_{ij})$ with $E_{ij}=e^{-|q_i-q_j|}$. In above, $dq_i$'s are one-forms dual to the vector fields $\partial_i=\frac{\partial}{\partial q_i}$. The tuple of one-forms $(dq_1,\ldots,dq_n)$ constitute a coframe on $\RR^n$. Thus, any covector $\alpha$ in the cotangent space $T^*_q\RR^n$ is written as $\alpha=p_1dq_1+\cdots+p_ndq_n$, for some coefficients $(p_1,\ldots,p_n)$. In this way the standard vertical coordinate functions $p=(p_1,\ldots,p_n)$ on $T^*\RR^n$ are introduced. Note that the condition $p_i=0$ translates to $\alpha(\partial_i)=0$.

The singular set of $g$, consisting of the hyperplanes $q_i=q_j$, $i\neq j$, will be denoted $\Sigma$, i.e.
\[
\Sigma=\{q=(q_1,\ldots,q_n)\in\RR^n\ \colon\ \exists_{i,j}\ q_i=q_j,\ i\neq j\}.
\]
As pointed out in the Introduction, the points in $\RR^n\setminus\Sigma$ are regular points of $g$. Note that this domain is not connected. However, due to the invariance of $H$ with respect to the order of coordinates, without loss of generality, one can study the dynamics of the system within the subset
\[
\Omega=\{q=(q_1,\ldots,q_n)\in\RR^n\ \colon\ q_1>q_2>\cdots>q_n\}.
\]
Let us recall that from the point of view of the Camassa--Holm equation, coordinate functions $q_i$'s represent positions of subsequent peaks of a multipeakon. Limiting to $\Omega$ means that we index them from the right to the left.

\subsection{Killing vector field}\label{sec:killing}
A Killing vector field of a metric $g$ is an infinitesimal symmetry of $g$, i.e. a vector field $X$ satisfying $L_Xg=0$ where $L_X$ is the Lie derivative in the direction of $X$. In the present case it is clear that the Hamiltonian, and consequently system \eqref{eq:hamilton}, is invariant with respect to the translations of the form
\[
q_i\mapsto q_i+c,\qquad i=1,\ldots,n.
\]
It follows that
\[
X=\partial_1+\cdots+\partial_n
\]
is a Killing vector field of the associated metric $g$. Because of that the product $g(\dot\gamma,X)$ is preserved along $\gamma$, provided $\gamma$ is a geodesic curve of $g$.

\subsection{First integrals}\label{sec:integrals}
System \eqref{eq:hamilton} is a Liouville integrable system in $\Omega$ and as an integrable system it possesses $n$ independent first integrals $H_0, H_1,\ldots,H_{n-1}$ (see \cite{K} for a detailed exposition). It appears that $H_i$ is a homogeneous polynomial in $p=(p_i)$ of degree $i+1$. In particular
\[
H_0(q,p)=p_1+\cdots+p_n
\]
is linear in $p$. It is worth to notice that it is a conserved quantity associated to the upper mentioned Killing vector field $X$ of metric $g$. Indeed, one observes that along a geodesic $\gamma$ one gets $H_0=g(\dot\gamma,X)$ (compare \eqref{eq:pq} below).

Further, $H_1$ is quadratic in $p$ and, as one expects, it is the original Hamiltonian $H$
\[
H_1(q,p)=\frac{1}{2}\sum_{i,j=1}^np_ip_je^{-|q_i-q_j|}.
\]
The third first integral $H_2$ is cubic in $p$. For $n=3$ it takes the following form
\[
H_2(p,q)=\frac{1}{3}(p_1^2+p_2^2+p_3^2+3e^{(q_1-q_2)}(p_1^2p_2+p_1p_2^2)+3e^{(q_2-q_3)}(p_2^2p_3+p_2p_3^2)
\]
\[
+3e^{(q_1-q_3)}(p_1^2p_3+p_1p_3^2)+6e^{-(q_1-q_3)}p_1p_2p_3).
\]
The higher first integrals can be explicitly written down using the bi-Hamiltonian approach of \cite{K}. However, we shall not need them in the present paper and refer an interested reader to \cite{K}.

\subsection{Invariant sets}\label{sec:invset}
The flow of \eqref{eq:hamilton} preserves the level sets of $H_i$'s (which are the subsets of $T^*\RR^n\simeq\RR^{2n}$). Unfortunately, since the degree of $H_i$ grows with $i$, it is a very hard task to find the level sets of the Hamiltonians for $n>2$. However, one can easily observe the following.
\begin{prop}\label{prop:zeroset}
The set
\[
\ZZ=\bigcup_{i=1}^n\{(q,p)\in\RR^{2n}\ \colon\ p_i=0\}
\]
is an invariant subset of the Hamiltonian flow of \eqref{eq:hamilton}.
\end{prop}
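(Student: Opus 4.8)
The plan is to reduce the invariance of $\ZZ$ to the invariance of each individual hyperplane $\{p_i=0\}$. Since $\ZZ=\bigcup_{i=1}^n\{(q,p):p_i=0\}$ and the Hamiltonian flow maps trajectories bijectively to trajectories, it suffices to prove that each single set $\{p_i=0\}$ is preserved. I would establish this by inspecting the evolution equation for a single momentum coordinate $p_i$.

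First I would compute $\dot p_i=-\partial H/\partial q_i$ directly. Differentiating $H=\frac12\sum_{j,k}p_jp_ke^{-|q_j-q_k|}$ in $q_i$, only the terms with $j=i$ or $k=i$ contribute, and using $\partial_{q_i}e^{-|q_i-q_k|}=-\sign(q_i-q_k)e^{-|q_i-q_k|}$ together with the symmetry of the double sum I expect to arrive at
\[
\dot p_i=p_i\sum_{k\neq i}p_k\,\sign(q_i-q_k)\,e^{-|q_i-q_k|}.
\]
The key structural observation is that the common factor $p_i$ can be pulled out of the entire right-hand side.

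Consequently, along any trajectory the coordinate $p_i(t)$ obeys the homogeneous linear scalar equation $\dot p_i=f_i(t)\,p_i$, where $f_i(t)=\sum_{k\neq i}p_k(t)\,\sign(q_i(t)-q_k(t))\,e^{-|q_i(t)-q_k(t)|}$. As long as the trajectory remains within one ordering chamber (in particular within $\Omega$, away from the singular set $\Sigma$ where the signs are undefined), the coefficient $f_i$ is a continuous, locally bounded function of $t$. Hence $p_i(t)=p_i(t_0)\exp\big(\int_{t_0}^t f_i(s)\,\rd s\big)$, so if $p_i$ vanishes at one instant it vanishes for all times. This shows that $\{p_i=0\}$ is invariant, and taking the union over $i$ yields the claim.

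The computation itself is elementary, so the only point demanding care is the behavior at the singular set $\Sigma$, where the sign functions jump and $f_i$ ceases to be defined. Restricting attention to a single chamber such as $\Omega$, which is the standing setting of the paper, removes this difficulty entirely, and then the factorization $\dot p_i\propto p_i$ is essentially the whole content of the proof.
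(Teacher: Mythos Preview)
Your proof is correct and follows exactly the same idea as the paper: compute $\dot p_i$ from \eqref{eq:hamilton}, observe that it factors as $p_i$ times a bounded expression, and conclude that $p_i(t_0)=0$ forces $p_i\equiv 0$. The paper states this in one line (with a harmless extra factor $\tfrac12$), while you spell out the integrating-factor argument and the restriction to a chamber, but the content is identical.
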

\begin{proof}
Follows from \eqref{eq:hamilton} which reads $\dot p_i=\frac{1}{2}p_i\sum_{j\neq i}p_je^{-|q_i-q_j|}\sign(q_i-q_j)$, i.e. if $p_i=0$ then $\dot p_i=0$.
\end{proof}

From the geometric viewpoint, $\ZZ$ is a subset of the cotangent bundle $T^*\RR^n$ (which is naturally identified with $\RR^{2n}$). In the forthcoming sections it will be convenient to pass to the tangent bundle. For this we exploit the duality between the tangent and cotangent bundles established by $g$. Namely, if $X\in T_q\RR^n$ is a tangent vector then the dual covector $X^*$ is defined by the formula
$$
X^*(Y)=g(X,Y), \ \ \mbox{for all}\ \ Y\in T_q\RR^n.
$$
Applying the duality to conditions $p_i=0$, $i=1,\ldots,n$, at any $q\in\Omega$ one defines $n$ codimension 1 subspaces of $T_q\RR^n$. Indeed,
\[
\D_i(q)=\{X\in T_q\RR^n\; :\; X^*(\partial_i)=0\}.
\]
In this way we get $n$ vector distributions $q\mapsto\D_i(q)$, $i=1,\ldots,n$, each of rank $n-1$, defined at all points of the considered domain $\Omega$ in $\RR^n$. Later on we shall investigate properties of $\D_i$'s in details. At this point we shall only recall that the duality between $T\RR^n$ and $T^*\RR^n$ can be also seen on the level of solutions to \eqref{eq:hamilton}. Indeed, any solution $t\mapsto(q(t),p(t))$ to the Hamiltonian system \eqref{eq:hamilton} satisfies
\begin{equation}\label{eq:pq}
p=E^{-1}\dot{q}=g(\dot{q},.),
\end{equation}
where $E$ is the dual matrix of $g$. It follows that the curve $t\mapsto q(t)$ itself determines the corresponding second factor $p(t)$ uniquely. Consequently, the solutions of \eqref{eq:hamilton} contained in the invariant set $\ZZ$ are lifts of geodesics of $g$ that are tangent to distributions $\D_i$'s.

To complete this section let us notice that in the low dimensional cases $n=2$ and $n=3$ one can come to the conclusion of Proposition \ref{prop:zeroset} in a way that does not involve \eqref{eq:hamilton}. Namely, one can consider
\[
\hat H=H_1-\frac{1}{2}H_0^2
\]
for $n=2$ or
\[
\hat H=H_2-H_0H_1+\frac{1}{6}H_0^3
\]
for $n=3$. Then, direct computations prove in both cases that the corresponding  zero sets of $\hat H$ consist of $\ZZ$ and $\Sigma$. Indeed
\[
\hat H=p_1p_2\left(e^{-|q_1-q_2|}-1\right)\;\;\mbox{for}\;\;n=2
 \]
and
\[
\hat H=p_1p_2p_3\left(1+e^{-(q_1-q_3)}-e^{-(q_1-q_2)}-e^{-(q_2-q_3)}\right)\;\;\mbox{for}\;\; n=3.
\]
In this way one recovers $\ZZ$ purely in terms of the first integrals.

\paragraph*{Remark.}
In order to justify the definition of $\hat H$ for $n=2$ notice that it is a quadratic function with respect to $p$. One can consider more general function $\hat H_\epsilon=H_1-\epsilon H_0^2$, so that $\hat H$ defined above corresponds to $\epsilon=\frac{1}{2}$. For all values of $\epsilon$ it is a conserved quantity of \eqref{eq:hamilton}. Further, similarly to the case $\epsilon=0$, $\hat H_\epsilon$ corresponds to a certain (pseudo-)Riemannian metric $g_\epsilon$ on an open subset of $\RR^2\setminus\Sigma$. If $\epsilon$ is negative then the metric is positive definite. On the other hand, it turns out that $\epsilon=\frac{1}{2}$ is the smallest $\epsilon$ such that $g_\epsilon$ has Lorentzian signature on the whole set $\RR^2\setminus\Sigma$.

\subsection{Comparison Theorems}\label{sec:comparison}
A fundamental tool in our approach will be the Rauch comparison theorem that give estimates for the behavior of geodesics in terms of the sectional curvature. We recommend a classic book \cite{CE} for details on the subject. Here, in order to fix the notation we recall that for a given point $q$ in a manifold $M$ and a 2-dimensional subspace $\sigma$ of the tangent space $T_qM$ the sectional curvature $\kappa_\sigma$ can be defined as the Gauss curvature of the 2-dimensional geodesic submanifold of $M$ tangent to $\sigma$ at $q$. Later on, in Section \ref{sec:curvature3D} we shall provide explicit formulae for the sectional curvature in terms of the components of the Riemann tensor (in dimension 2 one can consider just the Gauss curvature instead of the sectional curvature).

The following result  follows from \cite[Theorem 1.28]{CE} with $M_0$ being a manifold of constant sectional curvature $\kappa$ (see a discussion following \cite[Theorem 1.28, page 30]{CE}).
\begin{theorem}[Rauch]\label{thm:rauch1}
Let $M$ be a Riemannian manifold such that for any 2-dimensional subspace $\sigma$ of the tangent bundle $TM$
$$
\kappa_\sigma<\kappa
$$
for some constant $\kappa>0$. Then for any normal geodesic $\gamma\colon[0,T]\to M$ its first conjugate time is no earlier than at time $t^*=\frac{\pi}{\sqrt{\kappa}}$. In particular, if $\kappa_\sigma<0$ then there are no conjugate points on $\gamma$.
\end{theorem}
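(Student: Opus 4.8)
The plan is to reduce the statement to a comparison of Jacobi fields and then to invoke the variational characterization of conjugate points through the index form. Recall that $\gamma(t_0)$ is conjugate to $\gamma(0)$ along a normal geodesic $\gamma$ precisely when there is a nontrivial Jacobi field $J$ along $\gamma$, solving $J''+R(J,\dot\gamma)\dot\gamma=0$, with $J(0)=0$ and $J(t_0)=0$. Hence the first conjugate time is the smallest $t_0>0$ for which such a field exists, and I must show $t_0\ge \pi/\sqrt{\kappa}$. As a model I would take $M_0$ to be the simply connected space form of constant curvature $\kappa$, that is the round sphere of radius $1/\sqrt{\kappa}$. Along a unit-speed geodesic of $M_0$, every Jacobi field normal to the velocity and vanishing at $0$ has the form $\frac{\sin(\sqrt{\kappa}\,s)}{\sqrt{\kappa}}E(s)$ with $E$ parallel, so it first vanishes exactly at $s=\pi/\sqrt{\kappa}$. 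This is the conjugate time I want to transport to $M$.

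The heart of the argument is the index form
\[
I_t(V,V)=\int_0^t\Big(|V'|^2-\langle R(V,\dot\gamma)\dot\gamma,V\rangle\Big)\,ds,
\]
taken over piecewise smooth vector fields $V$ along $\gamma$ vanishing at the endpoints. The standard fact I would rely on (the Morse index theorem, equivalently the index lemma of \cite{CE}) is that $\gamma$ carries no conjugate point in $(0,t)$ if and only if $I_t$ is positive definite on such fields, and that the first conjugate time is exactly the infimum of those $t$ for which positive-definiteness fails. It therefore suffices to show that $I_t$ stays positive definite on $M$ for every $t<\pi/\sqrt{\kappa}$.

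To produce that positivity I would compare termwise with the model. Decomposing $V=V_\parallel+V_\perp$ into parts tangent and normal to $\dot\gamma$, the tangential part contributes only $|V_\parallel'|^2\ge 0$ and no curvature, so it can only help. For the normal part the curvature hypothesis gives, since $|\dot\gamma|=1$ and $V_\perp$ is orthogonal to $\dot\gamma$,
\[
\langle R(V_\perp,\dot\gamma)\dot\gamma,V_\perp\rangle=\kappa_\sigma\,|V_\perp|^2<\kappa\,|V_\perp|^2,
\]
where $\sigma$ is the plane spanned by $V_\perp$ and $\dot\gamma$. Transferring $V$ to a field $\tilde V$ on $M_0$ with the same components in a parallel orthonormal frame (so that $|\tilde V|=|V|$ and $|\tilde V'|=|V'|$), the integrand of $I_t$ on $M$ strictly dominates the integrand of the index form of the constant-curvature model, whence $I_t^{M}(V,V)\ge I_t^{M_0}(\tilde V,\tilde V)$. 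Since the model index form is positive definite for every $t<\pi/\sqrt{\kappa}$, the same holds on $M$, so no conjugate point can occur before $\pi/\sqrt{\kappa}$. For the final assertion, if $\kappa_\sigma<0$ one compares instead with flat or hyperbolic space, where the $\sin$ profile is replaced by a linear or $\sinh$ profile that never returns to zero, so the index form stays positive for all $t$ and $\gamma$ carries no conjugate points at all.

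The step I expect to be most delicate is the transfer between $M$ and $M_0$ together with the index-form equivalence: one must choose parallel orthonormal frames along $\gamma$ and along the model geodesic so that the boundary conditions are preserved and the tangential/normal splitting is respected, and one must invoke with care the equivalence between positivity of the index form and absence of conjugate points. This is exactly the content packaged in \cite[Theorem 1.28]{CE} with $M_0$ the space form of curvature $\kappa$; in the write-up I would therefore either cite that result directly or reproduce the short index-comparison argument sketched above.
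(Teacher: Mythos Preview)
Your proposal is correct and aligns with the paper's treatment: the paper does not prove this statement at all but simply attributes it to \cite[Theorem 1.28]{CE} with $M_0$ taken to be the space form of constant curvature $\kappa$, which is exactly the citation you arrive at. Your index-form sketch is the standard argument underlying that reference and is sound; the paper merely omits it.
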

In above, a conjugate time for a  normal geodesic $\gamma\colon[0,T]\to M$ is a time $t^*<T$ such that there is a nontrivial Jacobi vector field $J$ along $\gamma$ satisfying $J(0)=J(t^*)=0$, where a Jacobi vector field is defined as an infinitesimal family of geodesics deforming $\gamma$, and a geodesic is normal if $|\dot\gamma(t)|=1$ (see e.g. \cite{CE}). In particular, if $\gamma(0)$ and $\gamma(t^*)$ are connected by two different geodesics (in a simply connected domain) then $t^*$ is a conjugate time for $\gamma$. We shall also use (in the 2-dimensional case only) the following result, witch follows \cite[Corollary 1.30]{CE} with $c$ being a geodesic in $M$ and $M_0$ being a flat Euclidian space.
\begin{corollary}[Rauch]\label{thm:rauch2}
If $M$ has a negative sectional curvature then any two geodesics emerging from a point $q\in M$ diverge at least as fast as straight lines in the Euclidian space.
\end{corollary}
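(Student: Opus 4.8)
The plan is to deduce the corollary from the Jacobi-field comparison underlying the Rauch theorem, \cite[Theorem 1.28]{CE}, specialized to the model space $M_0$ being flat Euclidean space of the same dimension. First I would translate the geometric phrase ``two geodesics emerging from $q$ diverge'' into the language of Jacobi fields. Let $\gamma\colon[0,T]\to M$ be a unit-speed geodesic with $\gamma(0)=q$, and consider a one-parameter family $\gamma_s$ of unit-speed geodesics, all starting at $q$, with $\gamma_0=\gamma$. The variation field $J(t)=\frac{\partial}{\partial s}\big|_{s=0}\gamma_s(t)$ is then a Jacobi field along $\gamma$ solving $J''+R(J,\dot\gamma)\dot\gamma=0$, and since every $\gamma_s$ passes through $q$ we have $J(0)=0$. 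The norm $|J(t)|$ is exactly the infinitesimal rate at which the neighbouring geodesic pulls away from $\gamma$, so ``divergence'' is measured by the growth of $|J|$. Because we vary only the initial direction at fixed speed, $J'(0)$ is orthogonal to $\dot\gamma(0)$, hence $J$ stays normal to $\gamma$ throughout and the Rauch estimate applies to it directly.

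Next I would carry out the (trivial) computation in the model. In flat Euclidean space the curvature vanishes, so the comparison Jacobi field $\bar J$ along a straight line, with $\bar J(0)=0$ and $|\bar J'(0)|=|J'(0)|$, solves $\bar J''=0$ and equals $\bar J(t)=t\,P(t)$ for a parallel field $P$ with $|P|=|J'(0)|$; thus $|\bar J(t)|=|J'(0)|\,t$, the linear separation of two straight lines issuing from a common point. The heart of the argument is the comparison inequality. Since $M$ has negative sectional curvature, every plane $\sigma\subset T_{\gamma(t)}M$ containing $\dot\gamma(t)$ satisfies $\kappa_\sigma<0=\kappa^{M_0}$, which is precisely the curvature hypothesis required by \cite[Theorem 1.28]{CE}. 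As the comparison field $\bar J$ never vanishes for $t>0$, the model geodesic has no conjugate points, so the theorem delivers $|J(t)|\ge|\bar J(t)|=|J'(0)|\,t$ on all of $[0,T]$. This is exactly the assertion that geodesics of $M$ spread at least as fast as straight lines.

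I expect no conceptual obstacle here; the only points demanding care are bookkeeping ones. One must confirm that the variation field relevant to ``geodesics emerging from a point'' is the normal Jacobi field vanishing at $t=0$, so that the orthogonality hypothesis in the Rauch estimate is met; should one wish to allow families with varying speed, the tangential component $J^\top(t)=t\,\langle J'(0),\dot\gamma(0)\rangle\,\dot\gamma(t)$ grows identically in $M$ and in the flat model (it solves $(J^\top)''=0$ in either space, being unaffected by curvature) and can simply be split off. One must also fix the normalization $|J'(0)|$, which only rescales both sides of the comparison and therefore leaves the qualitative conclusion untouched. With these matched up, the cited form of Rauch applies verbatim and the corollary follows.
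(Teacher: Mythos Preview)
Your argument is correct and is precisely the standard derivation: the paper does not supply its own proof but simply cites \cite[Corollary 1.30]{CE} with $M_0$ taken to be flat Euclidean space, and your proposal spells out exactly how that corollary is obtained from the Jacobi-field comparison of \cite[Theorem 1.28]{CE}. There is nothing to add.
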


\subsection{sub-Riemannian corank-1 structures}\label{sec:subR}
A general sub-Riemannian structure is given by a pair $(\D,h)$ where $\D$ is a vector distribution on a manifold and $h$ is a metric (a bi-linear, positive-definite product) on $\D$. It is usually assumed that the dimension of $\D(q)$ is independent of $q$ and this dimension is referred to as the rank of $\D$. It follows  that locally, around any point $q$, a distribution of rank $k$ is spanned by $k$ point-wise independent vector fields $X_1,\ldots,X_k$, i.e. $\D(q)=\spn\{X_1(q),\ldots,X_k(q)\}$.

A horizontal curve of $\D$ is a curve $\gamma$ (of appropriate regularity) that is tangent to $\D$ a.e., meaning that
\[
\dot\gamma(t)\in\D(\gamma(t))\qquad\mbox{a.e.}
\]
Note that metric $h$ can be used to define a length of a horizontal curve. A horizontal curve is called a sub-Riemannian geodesic if it is length-minimizing among all horizontal curves joining two given points.

Distribution $\D$ locally spanned by $X_1,\ldots,X_k$ is called involutive (or integrable) if all Lie brackets $[X_i,X_j]$ of vector fields  spanning $\D$ are sections of the original distribution $\D$ (c.f. the Frobenius theorem). Clearly, this notion does not depend on the choice of the vector fields spanning $\D$. On contrary, a distribution is called totally non-holonomic if all iterated Lie brackets of $X_1,\ldots,X_k$ span the whole tangent space (a number of iteration may vary from point to point). The fundamental Chow--Rashevskii theorem says that if a distribution $\D$ is totally non-holonomic then any two points in a connected component of the underlying manifold can be connected by a horizontal curve of $\D$.

In our case we get $n$ sub-Riemannian structures on $\Omega$ given by $\D_i$ with metrics $h_i$ defined as restrictions of $g$ to $\D_i$. All $\D_i$'s are of rank $n-1$ (i.e. they are of corank one). That means that all $\D_i$'s are (at least locally) defined as kernels of a one-form, say $\alpha_i$, which is given up to a multiplicative factor
\[
\ker\alpha_i=\D_i.
\]
The involutivity of $\D_i$ can be checked in terms of $\alpha_i$ instead of the Lie brackets. We shall use the following.
\begin{prop}\label{prop:contact}
A corank 1 distribution on a 3 dimensional manifold defined as a kernel of a one-form $\alpha$ is involutive if and only if
\[
d\alpha\wedge\alpha=0.
\]
\end{prop}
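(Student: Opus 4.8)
The plan is to reduce the involutivity of $\D=\ker\alpha$ to the vanishing of a single pointwise quantity and then to match that quantity with $d\alpha\wedge\alpha$; this is the classical Frobenius theorem specialized to the corank-$1$ setting. Note first that on a $3$-dimensional manifold a corank-$1$ distribution has rank $2$, so $d\alpha\wedge\alpha$ is a top-degree (that is, $3$-) form, and such a form vanishes if and only if it vanishes on one (equivalently, any) local frame.

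First I would invoke the intrinsic formula for the exterior derivative of a one-form,
\[
d\alpha(X,Y)=X(\alpha(Y))-Y(\alpha(X))-\alpha([X,Y]).
\]
If $X$ and $Y$ are sections of $\D$, then $\alpha(X)=\alpha(Y)=0$, so the first two terms drop out and we are left with $d\alpha(X,Y)=-\alpha([X,Y])$. Since $d\alpha$ is tensorial, this already shows that the condition $\alpha([X,Y])=0$ for all sections $X,Y$ of $\D$ — which is precisely the statement $[X,Y]\in\ker\alpha=\D$, i.e. involutivity — is equivalent to $d\alpha$ restricting to zero on $\D$, namely $d\alpha(X,Y)=0$ for every pair of vectors tangent to $\D$ at every point.

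It then remains to identify the restriction $d\alpha|_\D$ with the three-form $d\alpha\wedge\alpha$. Here I would work with a local frame $e_1,e_2,e_3$ adapted to $\alpha$, chosen so that $\alpha(e_1)=1$ and $\alpha(e_2)=\alpha(e_3)=0$, whence $\D=\spn\{e_2,e_3\}$. Evaluating the three-form on this frame and expanding the wedge of a two-form and a one-form, every term containing $\alpha(e_2)$ or $\alpha(e_3)$ vanishes and one is left with
\[
(d\alpha\wedge\alpha)(e_1,e_2,e_3)=d\alpha(e_2,e_3)\,\alpha(e_1)=d\alpha(e_2,e_3).
\]
Because $\{e_1,e_2,e_3\}$ is a basis, the top-form $d\alpha\wedge\alpha$ vanishes if and only if $d\alpha(e_2,e_3)=0$, which by bilinearity and antisymmetry is exactly $d\alpha|_\D=0$. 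Combining this with the previous step yields the stated equivalence.

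I do not expect a genuine obstacle; the statement is classical and the computation is short. The one point demanding care is the interplay between the section-wise and the pointwise formulations: one must use the tensoriality of $d\alpha$ to observe that $d\alpha(X,Y)$ depends only on the values of $X,Y$ at a point, and that every tangent vector lying in $\D$ at a point extends to a local section of $\D$. This ensures that the bracket condition $\alpha([X,Y])=0$ for all sections is genuinely equivalent to the pointwise condition $d\alpha|_\D=0$, so the two halves of the argument fit together.
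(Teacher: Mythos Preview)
Your argument is correct and is the standard Frobenius-type proof of this classical fact. Note, however, that the paper itself does not supply a proof of this proposition; it is stated in the preliminaries as a known result and used as a tool, so there is no ``paper's own proof'' to compare against. Your write-up would serve perfectly well as a self-contained justification.
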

In the following sections it will be of fundamental importance for understanding of the geometry of multipeakons to determine which distributions among $\D_i$'s are integrable. Note that in dimension 2 all corank-1 distributions are integrable (they are spanned by a vector field). On contrary, in higher dimensions a generic corank-1 distribution is non-holonomic. This phenomenon is reflected in greater complexity of the problem in higher dimensions.

\section{Dynamics of twopeakons}\label{sec:2D}
In this section we give another proof of the sufficient and necessary conditions for the collisions of twopeakons. For earlier proofs we refer to \cite{BSS, CGKM, K, GH, HR1, HR2}. Our new proof seems to be the easiest one.  Moreover we shall use an extension of a two-dimensional approach in the geometrically more complicated 3D case, which we deal with in Section~\ref{sec:3D}. Additionally, the asymptotics of twopeakons that do not collide is studied at the end of this section.

In the two-dimensional case metric $g$ in domain $\Omega$ is given by the following explicit formula
\[
g_{ij}=\frac{1}{\left(1-e^{-2(q_1-q_2)}\right)}(-1)^{i+j}e^{-|q_i-q_j|}.
\]
We start with the following Lemma, which is also interesting on its own as the curvature is not of constant sign.
\begin{lemma}\label{lemma:GC}
The Gauss curvature $\kappa_g$ of metric $g$ in domain $\Omega$ satisfies
\begin{equation}\label{eq:kappa}
\kappa_g=\frac{e^{(q_1-q_2)}-2}{e^{2(q_1-q_2)}+2e^{(q_1-q_2)}+1}.
\end{equation}
In particular,
\[
\kappa_g>0\;\mbox{if}\;q_1-q_2>\ln 2,\;\kappa_g=0\;\mbox{if}\;q_1-q_2=\ln 2,\;\mbox{and}\;\kappa_g<0\;\mbox{if}\;q_1-q_2<\ln 2.
\]
\end{lemma}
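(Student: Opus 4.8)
The plan is to reduce the computation to a one-variable problem by exploiting the translation symmetry. First I would work in $\Omega$, where the inequality $q_1>q_2$ resolves the absolute values: setting $r=q_1-q_2>0$ one has $E=\begin{pmatrix} 1 & e^{-r} \\ e^{-r} & 1 \end{pmatrix}$, whose determinant is $1-e^{-2r}$, so that $g=E^{-1}$ is precisely the matrix in the explicit expression for $g_{ij}$ recalled above. The crucial observation is that the entries of $g$ depend only on $r$ and not on $q_1+q_2$; this is the infinitesimal manifestation of the Killing field $X=\partial_1+\partial_2$ from Section~\ref{sec:killing}.

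Next I would pass to the adapted coordinates $u=q_1+q_2$ and $v=q_1-q_2$ (the first along $X$, the second transverse to it). Substituting $dq_1=\tfrac12(du+dv)$ and $dq_2=\tfrac12(du-dv)$ into $g=g_{11}(dq_1^2+dq_2^2)+2g_{12}\,dq_1dq_2$, and using the factorization $1-e^{-2v}=(1-e^{-v})(1+e^{-v})$, puts the metric in diagonal form
\[
g=\frac{1}{2(1+e^{-v})}\,du^2+\frac{1}{2(1-e^{-v})}\,dv^2,
\]
with both coefficients functions of $v$ alone. This diagonalization is the key step, and it is what makes the curvature accessible.

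With the metric written as $g=A(v)\,du^2+B(v)\,dv^2$, I would invoke the standard formula for the Gauss curvature of an orthogonal metric,
\[
\kappa_g=-\frac{1}{2\sqrt{AB}}\left[\partial_u\!\left(\frac{\partial_u B}{\sqrt{AB}}\right)+\partial_v\!\left(\frac{\partial_v A}{\sqrt{AB}}\right)\right],
\]
in which the first term vanishes because $A$ and $B$ are independent of $u$. Writing $x=e^{-v}$ (so $0<x<1$ on $\Omega$ and $\partial_v x=-x$), everything becomes an elementary single-variable computation: one finds $\partial_v A=\tfrac{x}{2(1+x)^2}$ and $\sqrt{AB}=\tfrac12(1-x^2)^{-1/2}$, whence $\partial_v A/\sqrt{AB}=x(1-x)^{1/2}(1+x)^{-3/2}$.

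Finally I would differentiate this once more in $v$ and substitute back into the formula. The only mildly delicate point is the algebraic simplification, where the quadratic terms in $x$ cancel and the bracket collapses to the linear factor $1-2x$; this cancellation is exactly what produces the change of sign of the curvature. Collecting the powers of $(1\pm x)$ yields $\kappa_g=\tfrac{x(1-2x)}{(1+x)^2}$, and rewriting $x=e^{-(q_1-q_2)}$ (equivalently multiplying numerator and denominator by $e^{2(q_1-q_2)}$) gives exactly \eqref{eq:kappa}. The sign statement is then immediate: the denominator $\big(e^{(q_1-q_2)}+1\big)^2$ is positive, so the sign of $\kappa_g$ agrees with that of $e^{(q_1-q_2)}-2$, which is positive, zero, or negative according as $q_1-q_2$ exceeds, equals, or is smaller than $\ln 2$. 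I expect no genuine obstacle beyond bookkeeping; the real idea is the symmetry-driven diagonalization, after which the orthogonal curvature formula does the rest.
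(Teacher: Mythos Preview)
Your proposal is correct and follows essentially the same approach as the paper: both diagonalize the metric via the linear change of variables adapted to the Killing field (the paper uses $(s_1,s_2)=\tfrac12(q_1+q_2,\,q_1-q_2)$, you use $(u,v)=(q_1+q_2,\,q_1-q_2)$, which differ only by a scale) and then compute the curvature of the resulting one-variable orthogonal metric. The sole difference is cosmetic---the paper works out the Christoffel symbols and the component $R_{1212}$ by hand, whereas you invoke the standard closed-form curvature formula for an orthogonal metric, which packages the same computation more compactly.
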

\begin{proof}
First, as in \cite[Theorem 3.1]{CGKM}, we introduce new variables
\[
s_1:=\frac{q_1+q_2}{2}, \;\;\; s_2:=\frac{q_1-q_2}{2}\;.
\]
Next, we notice that in this coordinates the metric takes a diagonal form
\[
g=\left[
	\begin{array}{cc}
	\frac{2}{1+e^{-2s_2}} & 0\\
	0 & \frac{2}{1-e^{-2s_2}}
	\end{array}
\right]\;.
\]
The advantage of new variables is that the Christoffel symbols are easy to compute. Let us recall that
\[
\Gamma_{ij}^k=1/2\sum_{r=1}^2\left(\frac{\partial g_{ir}}{\partial s_j}+\frac{\partial g_{jr}}{\partial s_i}-\frac{\partial g_{ij}}{\partial s_r}\right)g^{kr},
\]
where $g^{kr}$ are the $kr$ entries of the inverse of $g$.

We obtain
\[
\Gamma_{11}^1=\Gamma_{22}^1=\Gamma_{12}^2=0.
\]
and
\[
\Gamma_{12}^1=\frac{e^{-2s_2}}{1+e^{-2s_2}}\;,
\Gamma_{11}^2=-\frac{e^{-2s_2}\left(1-e^{-2s_2}\right)}{\left(1+e^{-2s_2}\right)^2}\;,\;\;
\Gamma_{22}^2=-\frac{e^{-2s_2}}{1-e^{-2s_2}}.
\]
Next, we recall that the Gauss curvature is expressed with the use of Christoffel's symbols as
\[
-g_{11}\kappa_g=\left(\frac{\partial \Gamma_{12}^2}{\partial s_1}-\frac{\partial \Gamma_{11}^2}{\partial s_2}+\Gamma_{12}^1\Gamma_{11}^2+\Gamma_{12}^2\Gamma_{12}^2-\Gamma_{11}^1\Gamma_{12}^2-\Gamma_{11}^2\Gamma_{22}^2\right),
\]
so that in our case
\[
-g_{11}\kappa_g=\left(-\frac{\partial \Gamma_{11}^2}{\partial s_2}-\frac{e^{-4s_2}\left(1-e^{-2s_2}\right)}{\left(1+e^{-2s_2}\right)^3}-\frac{e^{-4s_2}}{\left(1+e^{-2s_2}\right)^2}\right).
\]
And consequently
\begin{eqnarray*}
&&-g_{11}\kappa_g=\frac{6e^{-4s_2}-2e^{-2s_2}}{\left(1+e^{-2s_2}\right)^3}-\frac{e^{-4s_2}\left(1-e^{-2s_2}\right)}{\left(1+e^{-2s_2}\right)^3}-\frac{e^{-4s_2}}{\left(1+e^{-2s_2}\right)^2}\\
&=&\frac{6e^{-4s_2}-2e^{-2s_2}-e^{-4s_2}+e^{-6s_2}-e^{-4s_2}\left(1+e^{-2s_2}\right)}{\left(1+e^{-2s_2}\right)^3}\\
&=&\frac{4e^{-4s_2}-2e^{-2s_2}}{\left(1+e^{-2s_2}\right)^3}=\frac{2e^{-2s_2}\left(2e^{-2s_2}-1\right)}{\left(1+e^{-2s_2}\right)^3}\;.
\end{eqnarray*}
Hence
\[
\kappa_g=\frac{e^{-4s_2}\left(e^{2s_2}-2\right)}{e^{-4s_2}\left(e^{2s_2}+1\right)^2}=\frac{e^{2s_2}-2}{\left(e^{2s_2}+1\right)^2}\;.
\]
\end{proof}
Tracing the estimates of Lemma \ref{lemma:GC} also in the upper half-plane $\{(q_1,q_2):q_1<q_2\}$, we notice that
\[
\kappa_g=\frac{e^{q_2-q_1}-2}{\left(e^{q_2-q_1}+1\right)^2}
\]
there and arrive therefore at the following remark concerning the singularity of the Gauss curvature of $g$.
\begin{remark}
The Gauss curvature $\kappa_g$ of a twopeakon metric $g$ satisfies
\[
\lim_{q_1-q_2\rightarrow 0}\kappa_g(q_1,q_2)=-\frac{1}{4}\;.
\]
\end{remark}

Now we shall study distributions $\D_1$ and $\D_2$ introduced in Section \ref{sec:invset}. In the present case, both distributions are of rank 1. Therefore, they are integrable and one can consider the corresponding integral curves instead of the distributions. The set of (unparameterized) curves tangent to $\D_1$ will be denoted $\A$ and the set of curves tangent to $\D_2$ will be denoted $\B$.

\begin{prop}\label{prop:2Dfol}
There exist exactly one integral curve belonging to $\A$ and exactly one integral curve belonging to $\B$ passing through a given point $q=(q_1,q_2)\in\Omega$. All curves in $\A$ approach asymptotically the singular set $\Sigma$ for $q_1\rightarrow\infty$,  while $q_1-q_2\rightarrow\infty$ for $q_1\rightarrow -\infty$. Similarly, all curves in $\B$ approach set $\Sigma$ for $q_1\rightarrow -\infty$ and $q_1-q_2\rightarrow\infty$ for $q_1\rightarrow \infty$.
Both families $\A$ and $\B$ constitute foliations of $\Omega$. Moreover, any curve in $\A$ is transversal to any curve in $\B$.
\end{prop}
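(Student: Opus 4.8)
The plan is to reduce all four assertions to a pair of explicit first-order ordinary differential equations and to read everything off from their solutions. First I would observe that existence, uniqueness and the foliation property are essentially automatic once the line fields are identified. Because $g$ is positive definite on $\Omega$, each $\D_i$ is a smooth \emph{line} field (rank $1$), spanned by a nowhere vanishing vector field; for instance $\D_1$ is spanned by $-g_{12}\partial_1+g_{11}\partial_2$, which is nonzero since $g_{11}=\bigl(1-e^{-2(q_1-q_2)}\bigr)^{-1}>0$ throughout $\Omega$. The classical flow-box theorem then produces exactly one maximal integral curve through each point of $\Omega$ and organizes these curves into a foliation. Hence the substantive work is only the explicit integration and the asymptotic analysis.

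To integrate I would pass to the diagonalizing coordinates $s_1=\tfrac{q_1+q_2}{2}$, $s_2=\tfrac{q_1-q_2}{2}$ of Lemma~\ref{lemma:GC}, in which $g=\mathrm{diag}\bigl(\tfrac{2}{1+e^{-2s_2}},\tfrac{2}{1-e^{-2s_2}}\bigr)$. Using $\partial_1=\tfrac12(\partial_{s_1}+\partial_{s_2})$ and $\partial_2=\tfrac12(\partial_{s_1}-\partial_{s_2})$, the conditions $g(X,\partial_1)=0$ and $g(X,\partial_2)=0$ turn $\D_1$ and $\D_2$ into the slope fields $\frac{ds_1}{ds_2}=\mp\frac{1+e^{-2s_2}}{1-e^{-2s_2}}$ for $\A$ and $\B$ respectively. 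A single elementary quadrature gives first integrals, so that the curves of $\A$ are the level sets of $F_\A=q_1+\ln\bigl(1-e^{-(q_1-q_2)}\bigr)$ and those of $\B$ the level sets of $F_\B=q_2-\ln\bigl(1-e^{-(q_1-q_2)}\bigr)$. As $F_\A$ and $F_\B$ are submersions on $\Omega$, this makes the foliation structure fully explicit.

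The asymptotics then follow by letting the parameter $s_2$ run to its two endpoints in these formulas. As $s_2\to0^+$ we have $\ln(1-e^{-2s_2})\to-\infty$, which drives a curve of $\A$ toward $q_1\to+\infty$ with $q_1-q_2\to0$, i.e. toward $\Sigma$; at the opposite end $s_2\to\infty$ one reads off $q_1-q_2\to\infty$. The behavior of $\B$ is identical up to interchanging the two ends (owing to the opposite sign of the slope), yielding approach to $\Sigma$ as $q_1\to-\infty$ and $q_1-q_2\to\infty$ as $q_1\to\infty$. Transversality then needs no computation: a nonzero vector cannot be $g$-orthogonal to both $\partial_1$ and $\partial_2$, since these span $T_q\RR^2$ and $g$ is nondegenerate; hence $\D_1(q)\cap\D_2(q)=\{0\}$ and every curve of $\A$ meets every curve of $\B$ transversally.

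I expect the only delicate point to be the bookkeeping in the asymptotic step: tracking which endpoint of each curve approaches $\Sigma$ and which escapes to $q_1-q_2\to\infty$, and matching these limits against the ordering convention $q_1>q_2$ defining $\Omega$. The remaining ingredients — nonvanishing of the spanning fields, the quadrature, and transversality — are routine once the diagonal form of $g$ is available.
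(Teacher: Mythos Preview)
Your argument is correct and follows the same overall strategy as the paper---explicitly integrate the rank-one distributions and read off the asymptotics from the resulting first integrals---but the execution differs in two places worth noting. The paper stays in $(q_1,q_2)$: from $p_1=0$ and \eqref{eq:pq} it reads $\dot q_1-e^{-(q_1-q_2)}\dot q_2=0$, which integrates in one line to $e^{q_1}-e^{q_2}=\const$ (your $F_\A$ is the logarithm of this), and then handles the asymptotics via the time-parametrized equation $\dot z=p_2(0)(e^{-z}-1)$ for $z=q_1-q_2$; your detour through the diagonal coordinates $(s_1,s_2)$ is one extra step but equally valid, and parametrizing by $s_2$ rather than by the geodesic time makes the endpoint behavior transparent without invoking the Hamiltonian flow. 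Your transversality argument---that no nonzero vector can be $g$-orthogonal to both $\partial_1$ and $\partial_2$ because $g$ is nondegenerate---is actually cleaner than the paper's, which simply asserts transversality after writing down the two foliations.
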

\begin{proof}
From \eqref{eq:pq} we see that the condition $p_1=0$ is equivalent to
\[
\dot{q_1}-e^{-(q_1-q_2)}\dot{q_2}=0,
\]
i.e.
\[
\frac{d}{dt}\left(e^{q_1}-e^{q_2}\right)=0\;\;\mbox{which gives}\;\;e^{q_1}-e^{q_2}=\const,
\]
which is an equation for the foliation defined by $\A$. The same computation gives $e^{-q_1}-e^{-q_2}=\const$ as an equation of the foliation defined by $\B$.
Further, it follows that, for any point $q$ in the halfplane $q_1>q_2$ there exists exactly one curve in the family $\A$ passing throuh $q$ and, similarly, exactly one curve in the family $\B$ passing through $q$.
Corresponding curves are transversal.

Now, let us find asymptotics of $\A$ and $\B$. We start with $\A$ and we have
\[
\frac{d}{dt}(q_1-q_2)=p_2(0)\left(e^{q_2-q_1}-1\right),
\]
so that $z:=q_1-q_2$ satisfies $\dot{z}=p_2(0)\left(e^{-z}-1\right)$. It is clear then that
if $p_2(0)>0$, then $z(t)\rightarrow 0$ for $t\rightarrow \infty$ and $z(t)\rightarrow \infty$ for
time going back to $-\infty$. For $p_2(0)<0$ the situation is opposite. The same computation
shows asymptotics for curves of $\B$ family.
\end{proof}

Recall that by Section \ref{sec:invset} the curves in $\A$ and $\B$ are geodesics of $g$.  They will play a crucial role in the following proof of 2-dimensional version of necessary and sufficient condition for collisions.
\begin{theorem}\label{thm:collisions2D}
Let $u(x,t)=p_1(t)e^{-|x-q_1(t)|}+p_2(t)e^{-|x-q_2(t)|}$ be a twopeakon solution to the Camassa-Holm equation with initial data $(q(0),p(0))$ satisfying $q(0)\in\Omega$. Then the twopeakon collides in a finite time if and only if
\begin{equation}\label{eq:cond2D}
p_2(0)>0>p_1(0).
\end{equation}
\end{theorem}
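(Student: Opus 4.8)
The plan is to collapse the two–dimensional problem onto the single scalar $z:=q_1-q_2$, which is positive on $\Omega$ and vanishes exactly on $\Sigma$, so that a collision at a time $t^*>0$ means precisely $z(t^*)=0$. First I would record the two facts that drive everything. Differentiating along the flow and using \eqref{eq:pq} gives
\[
\dot z=(p_1-p_2)\bigl(1-e^{-z}\bigr),
\]
and since $z>0$ the factor $1-e^{-z}$ is strictly positive, so the sign of $\dot z$ equals the sign of $p_1-p_2$. Secondly, each hyperplane $\{p_i=0\}$ is invariant (the proof of Proposition \ref{prop:zeroset} shows $\dot p_i=0$ whenever $p_i=0$), hence by uniqueness for \eqref{eq:hamilton} no $p_i$ can change sign along a solution starting off $\ZZ$; the four sign patterns of $(p_1,p_2)$ are therefore preserved in time.

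Next I would invoke the two first integrals available in this dimension, $H_0=p_1+p_2=:P$ and $\hat H=p_1p_2\bigl(e^{-z}-1\bigr)$, both constant along the motion. Solving the second for $p_1p_2$ and inserting $(p_1-p_2)^2=P^2-4p_1p_2$ turns the velocity into a closed first–order equation for $z$ alone,
\[
\dot z=-\bigl(1-e^{-z}\bigr)\sqrt{P^2+\frac{4\hat H}{1-e^{-z}}}
\]
on the branch where $z$ decreases, so that the collision time is the improper integral
\[
t^*=\int_0^{z(0)}\frac{dz}{\bigl(1-e^{-z}\bigr)\sqrt{P^2+\dfrac{4\hat H}{1-e^{-z}}}}.
\]
The whole theorem then reduces to deciding when the integrand is real down to $z=0$ and when this integral converges. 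Note that in $\Omega$ one has $e^{-z}-1<0$, so the sign of $\hat H$ is opposite to that of $p_1p_2$.

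For the sufficiency of \eqref{eq:cond2D}: if $p_2(0)>0>p_1(0)$ then $p_1p_2<0$, hence $\hat H>0$; by sign preservation $p_1<0<p_2$ throughout, so $p_1-p_2<0$ and $z$ is strictly decreasing. Near $z=0$ one has $1-e^{-z}\sim z$, so the integrand behaves like $\tfrac{1}{2\sqrt{\hat H}}\,z^{-1/2}$, which is integrable; thus $z$ reaches $0$ at a finite $t^*$ and the twopeakon collides. For the necessity I would argue contrapositively: a collision forces $z\to 0$, while for real momenta with fixed sum $(p_1-p_2)^2\ge 0$ gives $p_1p_2\le P^2/4$; since $p_1p_2=\hat H/(e^{-z}-1)$, the case $\hat H<0$ would force $p_1p_2\to+\infty$, a contradiction, so collision requires $\hat H>0$, i.e.\ $p_1p_2<0$. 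The remaining pattern compatible with $\hat H>0$, namely $p_1>0>p_2$, has $p_1-p_2>0$ and hence $\dot z>0$, so $z$ increases and no forward collision occurs; this leaves exactly $p_2>0>p_1$. Finally the degenerate cases $p_1=0$ or $p_2=0$ are the curves of $\A$ and $\B$, where $\hat H=0$, the integrand is $\sim\tfrac{1}{|P|z}$ and the integral diverges; consistently with Proposition \ref{prop:2Dfol} these approach $\Sigma$ only asymptotically, never in finite time.

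The point I expect to require the most care is precisely the finite–versus–infinite–time dichotomy: monotone approach $z\to 0$ does not by itself yield a finite collision time. The decisive mechanism is that along a genuinely colliding geodesic the conserved $\hat H>0$ forces the relative momentum $p_2-p_1=\sqrt{P^2+4\hat H/(1-e^{-z})}$ to blow up like $z^{-1/2}$, turning $\dot z\sim-2\sqrt{\hat H}\,\sqrt z$ into an equation with finite extinction time, in sharp contrast with the foliation curves of $\A$ and $\B$, where $\hat H=0$ keeps $p_2-p_1$ bounded and produces only the exponential, asymptotic approach $\dot z\sim-|P|\,z$. Making this rate quantitative, so that the convergence of the defining integral at $z=0$ is rigorous, is the crux of the proof.
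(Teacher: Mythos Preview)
Your argument is correct and complete, but it follows a genuinely different route from the paper's proof of Theorem~\ref{thm:collisions2D}. You reduce everything to the scalar ODE for $z=q_1-q_2$, use the conserved quantities $H_0$ and $\hat H$ to close it, and decide the question by convergence of the integral $\int_0^{z(0)}\bigl[(1-e^{-z})\sqrt{P^2+4\hat H/(1-e^{-z})}\bigr]^{-1}\,dz$ at $z=0$. The paper instead argues geometrically: the two geodesic foliations $\A$ and $\B$ of Proposition~\ref{prop:2Dfol} cut $\Omega$ into four sectors at $q(0)$; sign preservation of $p_1,p_2$ traps each geodesic in its initial sector; and the finite-time collision in Sector~I is forced by combining the Killing field (monotone approach to $\Sigma$) with the Rauch comparison Corollary~\ref{thm:rauch2} in the negatively curved strip near $\Sigma$ (Lemma~\ref{lemma:GC}) to rule out merely asymptotic approach.

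Your approach is more elementary and entirely self-contained: it bypasses the curvature computation and the comparison theorem, and in fact yields the quantitative $z^{-1/2}$ blow-up of $p_2-p_1$ along the way, which is the content the paper postpones to Theorem~\ref{thm:estimmates2D}. The paper's geometric proof, on the other hand, is designed with extensibility in mind: the sector/invariant-hypersurface picture and the Rauch bound are exactly the ingredients carried over to the threepeakon case in Section~\ref{sec:3D}, where no closed scalar ODE is available and the analytic route you took does not generalize directly.
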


\begin{proof}
According to Proposition \ref{prop:2Dfol}, the two special geodesics of $g$ from families $\A$ and $\B$, respectively, that pass through the point $q(0)=(q_1(0), q_2(0))$, intersect transversally and approach asymptotically the singular set $\Sigma$, which is the boundary of $\Omega$. It follows that the halfplane $\Omega$ is divided into four sectors.
Sector I is located between the line $q_1=q_2$ and parts of curves from $\A$ and $\B$ emerging from the original point $(q_1(0),q_2(0))$ and approaching the line $q_1=q_2$ in the infinity. Sector II (resp. III) is located between parts of $\A$ and $\B$ to the right (resp. left) from the point $(q_1(0),q_2(0))$. Finally, sector IV is located between parts of curves $\A$ and $\B$ emerging from $(q_1(0),q_2(0))$ and moving away from the line $q_1=q_2$.

We claim that any solution starting at $(q_1(0),q_2(0))$ and directed into one of the Sectors I, II, III, IV, stays there. In particular, solutions from Sector II, III or IV never approach $\Sigma$. Indeed, we shall prove that they are bounded away from it by geodesics from families $\A$ and $\B$ respectively. In the last part of the proof we shall show that any trajectory hitting initially Sector I attains the set $q_1=q_2$ (which is equivalent to the collision of a twopeakon) at a finite time.

In order to show that a given solution does not leave its initial sector for any $t>0$, we shall exploit Proposition \ref{prop:2Dfol}. Recall that function $\hat H$ from Section \ref{sec:invset} is a constant of motion. Hence the sign of the product $p_1p_2$ is also a constant of motion. Consequently, none of $p_i$ can become $0$ along a solution and both $p_i$, $i=1,2$, preserve signs during the motion. On the other hand, due to the duality \eqref{eq:pq} between $p$ and $\dot q$, we can assign signs of $p_1(0)$ and $p_2(0)$ to geodesics emerging from $q(0)$ in directions belonging to different sectors (note that \eqref{eq:cond2D} corresponds to Sector I). Now,  assume that there exists a time, say $t_1$, such that a given solution hits the boundary of its sector at time $t_1$ i.e.\;the solution curve intersects either the curve in the family $\A$ or the curve in the family $\B$ originating from the initial point $q(0)$. Without loss of generality, we assume that it is the curve in $\A$. Then, it follows that the solution curve intersects the same curve in $\A$ twice: at $t=0$ and at $t=t_1$. We can repeat a construction of four sectors at point $q(t_1)$ and we get to the conclusion that the curve emerges from $q(t_1)$ into a different sector. Consequently $p_1$ changes its sign along the curve, which is a contradiction.

We have proved that any solution in Sectors II, III and IV does not collide. Let us show that any solution curve $\gamma$ in Sector I gives a finite time collision. First observe that since the product $g(\dot\gamma,X)$ is constant for $X$ being the Killing vector field, $\gamma$ decreases the euclidean distance to $\Sigma$ with time. Moreover, Sector I is bounded by curves in families $\A$ and $\B$ (which approach $\Sigma$ at infinity). We thus conclude that the curve $\gamma$ either approaches the singular set $\Sigma$ at infinity or there is a finite time collision. We shall exclude the first possibility. Assume the converse. Since $\gamma$ approaches  $\Sigma$,  we can assume that it is contained in the region $\Omega_-$ of $\Omega$,  for which the Gauss curvature is negative (see Lemma \ref{lemma:GC}). In this region we apply Corollary \ref{thm:rauch2} and get that the Euclidean distance between $\gamma$ and a curve in family $\A$ (or $\B$) grows to infinity. One gets a contradiction. Consequently $\gamma$ necessarily hits $\Sigma$ at a finite time.
\end{proof}

\subsection{Asymptotic estimates for twopeakons}\label{sec:2Das}
In this section we shall provide explicit quantitative estimates for collisions (and their lack) of twopeakons. For this we use the first integrals of Section \ref{sec:integrals}.

\begin{theorem}\label{thm:estimmates2D}
Let $u(x,t)=p_1(t)e^{-|x-q_1(t)|}+p_2(t)e^{-|x-q_2(t)|}$ be a twopeakon solution to the Camassa-Holm equation with initial data $(q(0),p(0))$ satisfying $q(0)\in\Omega$. Then there is a collision not later than at time
\begin{equation}\label{eq:colision2D}
t^*=\frac{2\sqrt{1-y(0)}}{y(0)\sqrt{(1+y(0))(2H_1-H_0^2)}},
\end{equation}
where $y(0)=e^{q_2(0)-q_1(0)}$, or
\begin{equation}\label{eq:escape2D}
q_1(t)-q_2(t)\rightarrow \infty\;\;\mbox{when}\;\;t\rightarrow\infty .
\end{equation}
\end{theorem}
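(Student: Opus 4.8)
The plan is to reduce the planar Hamiltonian flow to a single autonomous first-order ODE for the gap variable and then integrate it. Write $z=q_1-q_2$ and $y=e^{q_2-q_1}=e^{-z}\in(0,1)$ on $\Omega$, so that a collision corresponds to $y\to 1$ and the escape \eqref{eq:escape2D} to $y\to 0$. From $\dot q_i=\partial H/\partial p_i$ one computes $\dot q_1-\dot q_2=(p_1-p_2)(1-y)$, hence $\dot y=-y(p_1-p_2)(1-y)$. The two first integrals $H_0=p_1+p_2$ and $\hat H=H_1-\tfrac12H_0^2=p_1p_2(y-1)$ allow me to eliminate $p_1,p_2$: since $(p_1-p_2)^2=H_0^2-4p_1p_2=H_0^2+\frac{2(2H_1-H_0^2)}{1-y}$, I obtain the separable equation
\[
\dot y^2=y^2(1-y)\big[(1-y)H_0^2+2(2H_1-H_0^2)\big],
\]
in which $2H_1-H_0^2=2\hat H$ is conserved. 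This is the object I will analyze.

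Second, I would split according to the sign of $2H_1-H_0^2$ (equivalently of $p_1p_2$), recovering the dichotomy of Theorem \ref{thm:collisions2D}. If the collision condition \eqref{eq:cond2D} holds then $p_1p_2<0$, so $2H_1-H_0^2>0$, the bracket above is strictly positive for all $y\in(0,1)$, and $\dot y>0$; thus $y$ increases monotonically to $1$ and the collision time is exactly
\[
T=\int_{y(0)}^{1}\frac{dy}{y\sqrt{1-y}\,\sqrt{(1-y)H_0^2+2(2H_1-H_0^2)}}.
\]
If \eqref{eq:cond2D} fails then, by Theorem \ref{thm:collisions2D}, there is no finite-time collision; here I would use the phase line of the displayed ODE (the solution may meet a single turning point $y_*$ where the bracket vanishes and then turn back) together with Proposition \ref{prop:2Dfol}, which traps the trajectory between curves of $\A$ and $\B$, to conclude $y\to 0$, i.e. \eqref{eq:escape2D}.

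Third, to turn $T$ into the explicit bound \eqref{eq:colision2D} I would estimate the integrand from below on $[y(0),1]$ by three elementary steps: $y\ge y(0)$ (monotonicity), $(1-y)H_0^2\ge 0$, and $2(2H_1-H_0^2)\ge(1+y(0))(2H_1-H_0^2)$ (since $y(0)<1$). Together these give $y\sqrt{(1-y)H_0^2+2(2H_1-H_0^2)}\ge y(0)\sqrt{(1+y(0))(2H_1-H_0^2)}$, and the remaining integral is elementary, $\int_{y(0)}^{1}(1-y)^{-1/2}\,dy=2\sqrt{1-y(0)}$, producing precisely \eqref{eq:colision2D}.

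The computational heart is the elimination of $p_1,p_2$ in favour of $y$, $H_0$, $H_1$ and the careful bookkeeping of signs ensuring that $y$ is genuinely monotone on the collision branch, without which the change of variable in $T$ would be invalid. The more delicate conceptual point is the escape branch: the reduced first-order equation loses the direction of motion at a turning point, so one must argue — via the sign preservation of the $p_i$ and the sector structure of Proposition \ref{prop:2Dfol} — that after at most one reflection the trajectory leaves every neighbourhood of $\Sigma$ and satisfies $q_1-q_2\to\infty$.
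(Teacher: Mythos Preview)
Your treatment of the collision branch is essentially identical to the paper's: both reduce to a first-order ODE for the gap variable via $H_0$ and $\hat H$, and then estimate. Your version is marginally cleaner in that you first record the exact collision time as the integral $T$ and only then bound the integrand, whereas the paper bounds the differential inequality directly; either way the same three monotonicity estimates ($y\ge y(0)$, dropping the nonnegative $H_0^2$-term, and $1+y(0)<2$) produce \eqref{eq:colision2D}.

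The escape branch is where you diverge from the paper. The authors do \emph{not} argue on the phase line of the $y$-equation; instead they introduce the auxiliary variable $h=p_1-p_2$ and observe that it satisfies the decoupled Riccati equation $\dot h=\tfrac12(a^2-h^2)$ with $a^2=4H_1-H_0^2>0$, from which $h(t)\to a>0$ (unless $h(0)\le -a$, which they show forces the collision condition). Then $\dot z=h(1-e^{-z})$ is eventually positive and bounded below, giving $z\to\infty$ directly. This bypasses the turning-point issue entirely. Your phase-line argument for $y$ can be made to work, but as written it is incomplete: squaring to get $\dot y^2=F(y)$ loses the sign of $\dot y$, and you must explain why there is at most one reversal (this requires knowing that $p_1-p_2$ can change sign at most once, which is exactly what the Riccati equation for $h$ gives cleanly), and why $y$ cannot stall at the turning point $y_*$. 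Invoking Proposition~\ref{prop:2Dfol} does not help here: the sector structure keeps the trajectory away from $\Sigma$ but says nothing about $q_1-q_2\to\infty$; in Sectors~II and~III the bounding $\mathcal A$/$\mathcal B$ curves themselves approach $\Sigma$ on one side. The paper's $h$-ODE is the missing ingredient that makes the escape claim a one-line consequence.
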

\begin{proof}
Let us define $z(t):=q_1(t)-q_2(t)$. First we assume that there is a collision. Then, as in the proof of Theorem \ref{thm:collisions2D} the geodesic $t\mapsto q(t)$ is in Sector I.  We notice that in Sector~I $\hat H<0$, meaning that $2H_1-H_0^2>0$, where $\hat H$ is a constant of motion defined in Section \ref{sec:invset} (recall that the boundaries of sectors are defined by the equation $\hat H=0$). Moreover,
\begin{equation}\label{eq:ss}
\dot{z}=(1-e^{-z})(p_1-p_2).
\end{equation}
Observe that
\[
p_1-p_2=-\sqrt{2H_1-2p_1p_2(1+e^{-z})}.
\]
Hence
\begin{eqnarray}\label{eq:chic}
\dot{z}&=&-(1-e^{-z})\sqrt{2H_1-2p_1p_2(1+e^{-z})}\nonumber \\
&=&-\sqrt{(1-e^{-z})^2\left(2H_1-(1+e^{-z})\frac{H_0^2-2H_1}{1-e^{-z}}\right)}\nonumber \\
&=&-\sqrt{(1-e^{-z})\left(2H_1(1-e^{-z})-(1+e^{-z})(H_0^2-2H_1)\right)}\\
&\leq&-\sqrt{1-e^{-z}}\sqrt{-(1+e^{-z})(H_0^2-2H_1)}.\nonumber
\end{eqnarray}
Substituting $y:=e^{-z}$, we obtain from \eqref{eq:chic}
\begin{equation}\label{eq:imp}
\dot{y}=-e^{-z}\dot{z}\geq -y\left(-\sqrt{1-y}\sqrt{-(1+y)(H_0^2-2H_1)}\right).
\end{equation}
At this stage we notice that $\dot{y}>0$. Indeed, we are in Sector I, so that $p_1(0)<0<p_2(0)$. Signs of $p_i$, $i=1,2$, are preserved by the evolution and so $p_1(t)<0<p_2(t)$ for any $t>0$. Thus $\dot{z}<0$ due to \eqref{eq:ss} and so $\dot{y}>0$.

Hence, $y(t)>y(0)$ for any $t>0$ and \eqref{eq:imp} can be rewritten as
\[
\dot{y}\geq y(0)\sqrt{1-y}\sqrt{(1+y(0))(2H_1-H_0^2)}.
\]
We integrate the latter inequality and arrive at
\[
2\sqrt{1-y(t)}\leq 2\sqrt{1-y(0)}-ty(0)\sqrt{(1+y(0))(2H_1-H_0^2)},
\]
so that $y(t)=1$  (i.e. $z(t)=0$ meaning that a collision takes place) not later than at
\[
\frac{2\sqrt{1-y(0)}}{y(0)\sqrt{(1+y(0))(2H_1-H_0^2)}}.
\]

Now, we consider a twopeakon that does not collide, i.e. it does not satisfy \eqref{eq:cond2D}. Once again we use $z:=q_1-q_2$.
We check that $\frac{d}{dt}\left(p_1-p_2\right)=-p_1p_2e^{-z}$ which in turn gives
\[
\frac{d}{dt}\left(p_1-p_2\right)=\frac{1}{2}\left(H_0^2-y^2\right)e^{-z}.
\]
Denote $h:=p_1-p_2$. Then we rewrite $H_1$ as (see \cite{CGKM})
\begin{eqnarray}\label{eq:ano}
H_1&=&\frac{1}{4}\left(H_0^2+h^2\right)+\frac{1}{4}\left(H_0^2+h^2\right)e^{-z}\nonumber \\
&=&\frac{1}{4}\left(H_0^2+h^2+(H_0^2-h^2)e^{-z}\right).
\end{eqnarray}
Hence
\[
(H_0^2-h^2)e^{-z}=4H_1-H_0^2-h^2,
\]
so that $4H_1-H_0^2=H_0^2e^{-z}+h^2(1-e^{-z})>0$ and denoting $a^2:=4H_1-H_0^2$, \eqref{eq:ano} turns into
\begin{equation}\label{eq:ano1}
\dot{h}=\frac{1}{2}\left(a^2-h^2\right).
\end{equation}
Moreover, \eqref{eq:ss} can be rewritten as
\[
\dot{z}=h\left(1-e^{-z}\right),
\]
so that $\dot{z}>0$ as long as $h>0$. But due to \eqref{eq:ano1}, as long as $h(0)>a$, $h(t)>a$ for any $t>0$. This means that $z(t)$ grows with time and is unbounded. Moreover, if $h(0)>-a$, then $h$ grows and there exists $t_0$ such that $h(t)>0$ for any $t>t_0$. Thus, $z(t)$ grows for $t>t_0$ and tends to infinity with time. So that the only possibility that $z$ does not grow to infinity with time is when
\begin{equation}\label{eq:cc}
h(0)<0\qquad \mbox{and}\qquad h(0)^2>a^2.
\end{equation}
Our claim is that \eqref{eq:cc} is satisfied only when
\begin{equation}\label{eq:hyp}
p_1(0)p_2(0)<0\qquad\mbox{and}\qquad p_1(0)-p_2(0)<0.
\end{equation}
Notice that \eqref{eq:hyp} is equivalent to $p_2(0)>0>p_1(0)$, but this means that we are dealing with initial condition leading to finite-time collision. Hence, the proof is completed, provided we show that \eqref{eq:cc} implies \eqref{eq:hyp}.

In the last step we examine condition \eqref{eq:cc}. On the one hand it means that $p_1(0)-p_2(0)<0$. On the other hand $h(0)^2>a^2$ yields
\[(p_1(0)-p_2(0))^2>2\left(p_1(0)^2+p_2(0)^2+2e^{-z}p_1(0)p_2(0)\right) -p_1(0)^2-p_2(0)^2-2p_1(0)p_2(0),
\]
which leads us to
\[
0>4e^{-z}p_1(0)p_2(0),
\]
and we see that \eqref{eq:cc} implies $p_1(0)<p_2(0)$ and $p_1(0)p_2(0)<0$.
\end{proof}

\section{Dynamics of threepeakons}\label{sec:3D}

In the present section we use geometric tools to study existence of collisions for the threepeakons.  We have (see \cite[Corollary 2.1]{CGKM})
\[
g=E^{-1}=\left[
	\begin{array}{ccc}
	\frac{1}{1-e^{-2(q_1-q_2)}} & -\frac{e^{-(q_1-q_2)}}{1-e^{-2(q_1-q_2)}} & 0\\
	  -\frac{e^{-(q_1-q_2)}}{1-e^{-2(q_1-q_2)}} & \frac{1-e^{-2(q_1-q_3)}}{(1-e^{-2(q_1-q_2)})(1-e^{-2(q_2-q_3)})}&  -\frac{e^{-(q_2-q_3)}}{1-e^{-2(q_2-q_3)}}\\
	  0 &  -\frac{e^{-(q_2-q_3)}}{1-e^{-2(q_2-q_3)}} &  \frac{1}{1-e^{-2(q_2-q_3)}}
	\end{array}
\right]\;,
\]

We start with properties of the three rank-2 vector distributions $\D_1$, $\D_2$ and $\D_3$ introduced in Section \ref{sec:invset}, defined at all points of the considered domain $\Omega\subset\RR^3$. It turns out that there is a substantial difference between the cases of two- and threepeakons. Namely, in dimension 2 distributions $\D_1$ and $\D_2$ are of rank 1 and because of that they can be replaced by two families of curves, denoted $\A$ and $\B$ respectively. On the other hand, in dimension 3, a generic distribution of rank 2 is not integrable. As a matter of fact this is the case when it comes to $\D_2$.
\begin{prop}\label{prop:3Ddistr}
Distributions $\D_1$ and $\D_3$ are integrable, whereas distribution $\D_2$ is non-integrable in $\Omega$. Any leaf of $\D_1$ or $\D_3$ cuts $\Omega$ into two sectors. Moreover leafs of $\D_1$ asymptotically converge to the plane $q_1=q_2$ as $q_1\to\infty$ and leafs of $\D_3$ asymptotically converge to the plane $q_2=q_3$ as $q_3\to-\infty$.
\end{prop}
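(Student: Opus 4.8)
The plan is to realize each $\D_i$ as the kernel of an explicit one-form and then apply the Frobenius criterion in the form of Proposition \ref{prop:contact}. By the definition of $\D_i$ in Section \ref{sec:invset}, a vector $X$ lies in $\D_i(q)$ iff $g(X,\partial_i)=0$, so $\D_i=\ker\alpha_i$ where $\alpha_i=\sum_j g_{ij}\,dq_j$ is the $i$-th row of the metric matrix $g$ displayed above. Writing $a=e^{-(q_1-q_2)}$ and $b=e^{-(q_2-q_3)}$ (so that $ab=e^{-(q_1-q_3)}$ and $a,b,ab\in(0,1)$ throughout $\Omega$), I read off $\alpha_1=\frac{1}{1-a^2}(dq_1-a\,dq_2)$, $\alpha_3=\frac{1}{1-b^2}(dq_3-b\,dq_2)$, and $\alpha_2\propto\tilde\alpha_2:=-a(1-b^2)\,dq_1+(1-a^2b^2)\,dq_2-b(1-a^2)\,dq_3$ after clearing the common denominator $(1-a^2)(1-b^2)$. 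The decisive structural observation is that $\alpha_1$ and $\alpha_3$ each involve only two of the three coordinates, whereas $\alpha_2$ genuinely couples all three.

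For $\D_1$ and $\D_3$ integrability is then immediate: since $\alpha_1$ (up to a nonvanishing factor) lives in the $(q_1,q_2)$-variables only, every term of $d\alpha_1\wedge\alpha_1$ carries a repeated $dq_1$ or $dq_2$ and hence vanishes, so Proposition \ref{prop:contact} applies; the same argument works for $\D_3$. I would then integrate $dq_1-a\,dq_2=0$ explicitly, which rearranges to $e^{q_1}\,dq_1=e^{q_2}\,dq_2$ and yields the first integral $F_1:=e^{q_1}-e^{q_2}$; symmetrically $\D_3$ has first integral $F_3:=e^{-q_2}-e^{-q_3}$. Each leaf is a regular level set of $F_1$ (resp. $F_3$), whose gradient $(e^{q_1},-e^{q_2},0)$ (resp. $(0,-e^{-q_2},e^{-q_3})$) never vanishes on $\Omega$, so a single leaf $\{F_1=c\}$ splits $\Omega$ into $\{F_1>c\}$ and $\{F_1<c\}$, the two claimed sectors. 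The asymptotics follow by solving on a leaf: on $\{F_1=c\}$ one has $q_1-q_2=\ln\!\big(1/(1-ce^{-q_1})\big)\to 0$ as $q_1\to\infty$, so the leaf converges to $q_1=q_2$, and on $\{F_3=c\}$ one has $e^{-(q_2-q_3)}=1+ce^{q_3}\to 1$ as $q_3\to-\infty$, giving convergence to $q_2=q_3$.

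The substantive point, and the main obstacle, is the non-integrability of $\D_2$, for which I must show $d\tilde\alpha_2\wedge\tilde\alpha_2\neq 0$ on $\Omega$. Using $\partial_1 a=-a$, $\partial_2 a=a$, $\partial_2 b=-b$, $\partial_3 b=b$ (and $\partial_3 a=\partial_1 b=0$), I would compute $d\tilde\alpha_2$ in the basis $dq_1\wedge dq_2,\ dq_1\wedge dq_3,\ dq_2\wedge dq_3$ and then wedge with $\tilde\alpha_2$. Although the intermediate expressions are bulky, I expect everything to collapse after factoring; a direct computation yields
\[
d\tilde\alpha_2\wedge\tilde\alpha_2=-2ab(1-a)(1-b)(1-ab)\,dq_1\wedge dq_2\wedge dq_3 .
\]
Since $a,b,ab\in(0,1)$ on $\Omega$, the coefficient is strictly negative everywhere, so $\tilde\alpha_2$ is nowhere involutive. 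Because rescaling a one-form $\alpha\mapsto f\alpha$ alters $d\alpha\wedge\alpha$ only by the factor $f^2$, the same conclusion holds for $\alpha_2$ itself, and by Proposition \ref{prop:contact} the distribution $\D_2$ is non-integrable at every point of $\Omega$ (in fact contact, hence bracket-generating). The single genuine difficulty is organizing the wedge-product computation so that this clean factorization emerges; all other assertions reduce to elementary one-variable calculus once the first integrals $F_1$ and $F_3$ are in hand.
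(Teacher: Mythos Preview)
Your argument is correct and follows essentially the same route as the paper. Both proofs identify the same first integrals $e^{q_1}-e^{q_2}$ and $e^{-q_2}-e^{-q_3}$ for $\D_1$ and $\D_3$, derive the asymptotics from them in the same elementary way, and verify non-integrability of $\D_2$ via Proposition~\ref{prop:contact}. The only cosmetic difference is that the paper writes down explicit spanning vector fields for the $\D_i$ (so that integrability of $\D_1,\D_3$ is read off from the bracket side), whereas you work consistently with the annihilating one-forms $\alpha_i$ taken as rows of $g$; your explicit factorization $d\tilde\alpha_2\wedge\tilde\alpha_2=-2ab(1-a)(1-b)(1-ab)\,dq_1\wedge dq_2\wedge dq_3$ is in fact more informative than the paper's bare assertion that $d\alpha\wedge\alpha\neq 0$.
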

\begin{proof}
Recall that $\D_i$ is exactly $p_i=0$, $i=1,2,3$, and then
\[
\begin{aligned}
&\D_1=\spn\{e^{-q_1}\partial_{q_1}+e^{-q_2}\partial_{q_2},\partial_{q_3}\},\\
&\D_2=\spn\{e^{-q_1}\partial_{q_1}+e^{-q_2}\partial_{q_2}+e^{-q_3}\partial_{q_3}, e^{q_1}\partial_{q_1}+e^{q_2}\partial_{q_2}+e^{q_3}\partial_{q_3}\},\\
&\D_3=\spn\{e^{q_2}\partial_{q_2}+e^{q_3}\partial_{q_3},\partial_{q_1}\}.\\
\end{aligned}
\]
It immediatelly follows that $\D_1$ and $\D_3$ are integrable. On the other hand $\D_2$ is annihilated by the one-form
\[
\begin{aligned}
\alpha=&\,\,(e^{-(q_1-q_2)}-e^{-(q_1+q_2-2q_3)})dq_1\\
&-(1-e^{-2(q_1-q_3)})dq_2\\
&+(e^{-(q_2-q_3)}-e^{-(2q_1-q_2-q_3)})dq_3,
\end{aligned}
\]
which satisfies $d\alpha\wedge\alpha\neq 0 $ provided $q\in\Omega$. Hence, by Proposition \ref{prop:contact}, $\D_2$ is not integrable. Note that for $q_1=q_2$ distribution $\D_2$ coincides with $\D_1$ and, similarly, for $q_2=q_3$ it coincides with $\D_3$. Moreover, for $q_1=q_2=q_3$ $\D_2$ degenerates to a line spanned by the Killing vector field $\partial_1+\partial_2+\partial_3$.

Now, integral curves of the vector field $e^{-q_1}\partial_{q_1}+e^{-q_2}\partial_{q_2}$ are given by $e^{q_1}-e^{q_2}=\const$. Indeed, if $\dot q_1=e^{-q_1}$ and $\dot q_2=e^{-q_2}$ then $\dot q_1-e^{-(q_1-q_2)}\dot q_2=0$ which implies $\frac{d}{dt}(e^{q_1}-e^{q_2})=0$. Thus, as in Proposition \ref{prop:2Dfol}, $q_1-q_2\to 0$ for $q_1\to\infty$.  Similarly, integral curves of the vector field $e^{q_2}\partial_{q_2}+e^{q_3}\partial_{q_3}$ satisfy $e^{-q_2}-e^{-q_3}=\const$ and consequently $q_2-q_3\to 0$ for $q_3\to-\infty$.
\end{proof}

In the case of twopeakons, the curves $\A$ and $\B$ split the half space $q_1>q_2$ into 4 sectors. The sectors define obstacles preventing geodesics from hiting the singular set $q_1=q_2$. In the case of threepeakons, $\D_1$, $\D_2$ and $\D_3$ divide each tangent space into 8 sectors (at a generic point). However, since $\D_2$ is non-integrable it is impossible to define sectors on the underlying manifold. The sectors are defined in each tangent space only. Nonetheless, the distribution $\D_2$ can be used to define obstacles for geodesics in a more subtle way. We start with the following result that exploits results of \cite{K} on asymptotic behavior of geodesics in neighbourhoods of $\Sigma$. In fact it is a direct consequence of \cite[Lemma 4.1]{K}.

\begin{lemma}\label{lemma:D2}
Assume that a geodesic $t\mapsto q(t)$ of metric $g$ is a horizontal curve of $\D_2$, i.e. $\dot q(t)\in\D_2(q(t))$. Then, if the geodesic converges in a finite time to a singular point $q^*$ of $g$ then $q^*$ belongs to the line $q_1=q_2=q_3$.
\end{lemma}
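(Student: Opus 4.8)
The plan is to translate the horizontality condition into the vanishing of a single momentum coordinate and then to control the two gaps $z_{12}:=q_1-q_2$ and $z_{23}:=q_2-q_3$ using the conserved Hamiltonian $H_1$. First I would observe that, by the duality \eqref{eq:pq}, the condition $\dot q\in\D_2$ is equivalent to $p_2\equiv 0$ along the lift $(q(t),p(t))$ of the geodesic; since the set $\ZZ$ is invariant (Proposition \ref{prop:zeroset}), the relation $p_2(0)=0$ propagates to $p_2(t)=0$ for all $t$. Thus the geodesic is governed by the reduced system in which only $p_1$ and $p_3$ are active.

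Next I would write out the equations for the gaps. A direct computation from $\dot q_i=\sum_j p_j e^{-|q_i-q_j|}$ with $p_2=0$ gives
\[
\dot z_{12}=(1-e^{-z_{12}})(p_1-p_3e^{-z_{23}}),\qquad \dot z_{23}=(1-e^{-z_{23}})(p_1 e^{-z_{12}}-p_3),
\]
so that each gap obeys an equation whose right-hand side vanishes to first order as the corresponding gap tends to $0$. The crucial input is that, with $p_2=0$, conservation of the Hamiltonian reads $H_1=\tfrac12(p_1^2+p_3^2)+p_1p_3 e^{-(z_{12}+z_{23})}$, a quadratic form in $(p_1,p_3)$ whose associated matrix has eigenvalues $1\pm e^{-(z_{12}+z_{23})}$. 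As long as the trajectory stays a fixed distance from the triple line, i.e.\ $q_1-q_3=z_{12}+z_{23}\ge c_0>0$, this form is uniformly positive definite, so conservation of $H_1$ forces $p_1$ and $p_3$ to remain bounded.

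I would then argue by contradiction. Suppose the geodesic reaches, in finite time $t^*$, a singular point $q^*$ not lying on $\{q_1=q_2=q_3\}$. In $\overline\Omega$ this means exactly one gap vanishes at $q^*$, say $z_{12}\to 0$ while $z_{23}\to c>0$ (the other case being symmetric). Then $q_1-q_3\to c>0$ stays bounded below, so by the previous step $p_1,p_3$, and hence the coefficient $p_1-p_3e^{-z_{23}}$, are bounded by some constant $K$ near $t^*$. Using $1-e^{-z_{12}}\le z_{12}$, the first equation yields $|\dot z_{12}|\le K z_{12}$, whence $z_{12}(t)\ge z_{12}(t_0)e^{-K(t-t_0)}>0$ for every finite $t$. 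This contradicts $z_{12}\to 0$ at the finite time $t^*$. The symmetric estimate excludes $z_{23}\to 0$ with $z_{12}$ bounded below, so a finite-time collision can occur only with both gaps vanishing simultaneously, i.e.\ at a point of the line $q_1=q_2=q_3$.

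The step I expect to be most delicate is the uniform boundedness of the momenta: the form $H_1$ degenerates precisely on the triple line, where the eigenvalue $1-e^{-(z_{12}+z_{23})}$ collapses to $0$ and the momenta are in fact allowed to blow up — which is exactly what makes a finite-time collision possible there. Hence care is needed to confine the degeneration to the corner and to verify that, away from it, the lower bound $q_1-q_3\ge c_0$ genuinely holds along the approach to $q^*$. Once this is secured the exponential estimate is elementary, and the conclusion is consistent with, and can alternatively be deduced from, the asymptotic analysis of \cite[Lemma 4.1]{K}.
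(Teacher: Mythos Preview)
Your argument is correct. The paper's proof, however, is a one-line appeal to \cite[Lemma 4.1]{K}: if the geodesic were to reach a point $q^*$ with $q^*_2=q^*_i$ but $q^*_j\neq q^*_2$ ($\{i,j\}=\{1,3\}$), that lemma asserts that $p_2+p_i$ stays bounded while $p_2-p_i$ diverges as $q(t)\to q^*$; plugging in $p_2\equiv 0$ forces $p_i$ to be simultaneously bounded and unbounded, which is the desired contradiction.

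Your route is genuinely different and fully self-contained. Rather than importing the blow-up of a momentum difference from \cite{K}, you use conservation of $H_1$ (whose quadratic form in $(p_1,p_3)$ is uniformly nondegenerate as long as $q_1-q_3$ stays bounded below) to keep both momenta bounded near a simple collision, and then exploit the factor $(1-e^{-z_{12}})$ in the gap equation together with a Gronwall estimate to prevent $z_{12}$ from vanishing in finite time. In effect you are reproving, from first principles, the piece of \cite[Lemma 4.1]{K} that matters here: a finite-time single collision would require an unbounded momentum, which your energy bound rules out. The paper's approach is shorter if one is willing to cite \cite{K}; yours has the advantage of being elementary and of making transparent exactly where the degeneracy at the triple line enters (the eigenvalue $1-e^{-(z_{12}+z_{23})}$ collapses only there), which you yourself flag as the delicate point.
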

\begin{proof}
Since $\D_2$ is non-integrable, then there exist horizontal curves of $\D_2$ that converge to any point in $\Omega$. We shall prove that it is not the case when it comes to geodesics. We proceed by contradiction. For this, let $q^*$ be a point such that $q^*_2=q^*_i$, but $q^*_j\neq q^*_2$, where $\{i,j\}=\{1,3\}$. Then, according to Lemma 4.1 in \cite{K}, $p_2+p_i$ is bounded and $p_2-p_i$ tends to infinity as $q(t)\to q^*$. But it is impossible for $p_2=0$.
\end{proof}

The reasoning of Lemma \ref{lemma:D2} can be applied to $\D_1$ and $\D_3$ as well (although, in some sense stronger properties of $\D_1$ and $\D_3$ have been already described in Proposition \ref{prop:3Ddistr} above). Indeed, note that since $\D_i=\{p_i=0\}$, any geodesic tangent to any $\D_i$, $i=1,2,3$, represents not a threepeakon but a twopeakon as there are only two components left in \eqref{eq:multi}. However, the corresponding geodesic, considered as a curve in $\RR^3$, encodes a position of the third peak of amplitude 0. This third peak evolves in time in some way according to \eqref{eq:multi}. No matter this evolution is, Lemma \ref{lemma:D2} says that it cannot collide with any other peak alone. Proposition \ref{prop:3Ddistr} can be strengthen in the following way.

\begin{prop}\label{prop:D1D3}
Assume that a geodesic $t\mapsto q(t)$ of metric $g$ is a horizontal curve of $\D_1$ or $\D_3$ with the initial data $(q(0),p(0))$ satisfying $q(0)\in\Omega$ and $p_2(0)>0$ in the first case, or $p_2(0)<0$ in the second case. Then the geodesic converges to $\Sigma$ at infinity (the hyperplane $q_1=q_2$, or $q_2=q_3$, respectively).
\end{prop}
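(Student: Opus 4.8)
The plan is to reduce the statement to the twopeakon analysis already carried out in Section~\ref{sec:2D}. First I would invoke Proposition~\ref{prop:zeroset}: a geodesic horizontal to $\D_1$ has $p_1(0)=0$, hence $p_1(t)\equiv 0$, so its lift stays in the invariant set $\ZZ$. By Proposition~\ref{prop:3Ddistr} the curve therefore lies on the leaf $\{e^{q_1}-e^{q_2}=c\}$ of $\D_1$, where $c=e^{q_1(0)}-e^{q_2(0)}>0$ since $q(0)\in\Omega$. On this leaf $q_1-q_2=\ln(1+ce^{-q_2})$, so $q_1-q_2\to 0$ is equivalent to $q_2\to+\infty$. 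Thus it suffices to prove $q_2(t)\to+\infty$ as $t\to\infty$.

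Next I would observe that, setting $p_1\equiv0$ in \eqref{eq:hamilton}, the equations for $(q_2,p_2,q_3,p_3)$ close up and are exactly the twopeakon system for the pair $(q_2,q_3)$:
\[
\dot q_2=p_2+p_3e^{-(q_2-q_3)},\quad \dot q_3=p_2e^{-(q_2-q_3)}+p_3,\quad \dot p_2=-\dot p_3=\tfrac12 p_2p_3e^{-(q_2-q_3)},
\]
the coordinate $q_1$ decoupling and being recovered from the leaf relation. By Proposition~\ref{prop:zeroset} the sign of $p_2$ is preserved, so $p_2(t)>0$ for all $t$; moreover $\hat H=p_2p_3(e^{-(q_2-q_3)}-1)$ and $H_0=p_2+p_3$ are conserved for this subsystem. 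If $p_3(0)\ge 0$ then $\dot p_2\ge0$, so $p_2(t)\ge p_2(0)$ and $\dot q_2\ge p_2(0)>0$, giving $q_2\to+\infty$ linearly. If $p_3(0)<0$ then $p_2>0>p_3$; by Theorems~\ref{thm:collisions2D} and~\ref{thm:estimmates2D} applied to $(q_2,q_3)$ the pair does not collide and escapes, i.e. $q_2-q_3\to\infty$.

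In the escape case the hard part is to rule out $p_2\to0$, which alone would not force $q_2\to+\infty$. Here I would use conservation: since $p_2$ is positive and decreasing it has a limit $p_2^\infty\ge0$, and $p_3\to p_3^\infty=H_0-p_2^\infty$; letting $q_2-q_3\to\infty$ in $\hat H$ gives $p_2^\infty p_3^\infty=-\hat H$. As $\hat H=p_2(0)p_3(0)(e^{-(q_2(0)-q_3(0))}-1)>0$ in this case, the product $p_2^\infty p_3^\infty<0$, forcing $p_2^\infty>0$. Then $\dot q_2\to p_2^\infty>0$, so again $q_2\to+\infty$, which by the first paragraph yields $q_1-q_2\to0$. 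Finally, the statement for $\D_3$ with $p_2(0)<0$ follows from the discrete symmetry $q_i\mapsto-q_{4-i}$, $p_i\mapsto-p_{4-i}$, a symplectomorphism preserving $H$ that maps $\D_1$ to $\D_3$ and exchanges the two conclusions. I expect this escape subcase to be the only genuine obstacle, the remaining cases being bookkeeping on the reduced twopeakon.
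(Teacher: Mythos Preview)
Your argument is correct. Both your proof and the paper's reduce to the twopeakon subsystem in $(q_2,q_3,p_2,p_3)$ and invoke the escape conclusion of Theorem~\ref{thm:estimmates2D} (no collision since $p_2>0$), but the packaging differs. The paper studies $s=q_1-q_2$ directly via the scalar ODE $\dot s=A(t)(e^{-s}-1)$ with $A=p_2+p_3e^{q_3-q_2}$, then expresses $p_2$ and $A$ through $H_0,H_1$ by the quadratic formula and lets $e^{q_3-q_2}\to 0$ to obtain a positive limit for $A$, forcing $s\to 0$. You instead use the leaf equation $e^{q_1}-e^{q_2}=c$ from Proposition~\ref{prop:3Ddistr} to convert the conclusion into $q_2\to+\infty$, then split on the sign of $p_3$ and, in the nontrivial case, extract $p_2^\infty>0$ from the conserved quantity $\hat H$. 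Your route avoids the square-root formula and makes the role of $\hat H$ transparent, at the cost of a case distinction the paper handles uniformly; the paper's route is shorter but hides the positivity of the limit inside the choice of branch of the quadratic. One inessential slip: in your displayed subsystem the factor $\tfrac12$ in $\dot p_2$ should be absent (only the sign is used, so nothing is affected).
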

\begin{proof}
Without loss of generality we limit ourselves to the case of $\D_1$. For this we assume $p_1=0$ and $p_2>0$ and our goal is to prove the $s(t)=q_1(t)-q_2(t)\to 0$ as $t\to\infty$. From \eqref{eq:hamilton} we have
\[
\dot s(t)=A(t)(e^{-s}(t)-1)
\]
with
\[
\begin{aligned}
A(t)&=p_2(t)+e^{q_3(t)-q_2(t)}p_3(t)=\\
&\frac{1}{2}\left(H_0\left(1+e^{q_3-q_2}\right)+\sqrt{(1-e^{q_3(t)-q_2(t)})(4H_1-(1+e^{q_3(t)-q_2(t)})H_0^2)}\right),
\end{aligned}
\]
where we computed $p_2$ and $p_3$ from formulas for $H_0$ and $H_1$ under assumption $p_1=0$. Moreover, $p_2>0$ implies that $A(t)>0$ for sufficiently large $t$ (and then $s(t)\to 0$ follows). Indeed,
\[
p_2=\frac{1}{2}H_0+\frac{1}{2}\sqrt{\frac{4}{1-e^{q_3(t)-q_2(t)}}H_1-\frac{1+e^{q_3(t)-q_2(t)}}{1-e^{q_3(t)-q_2(t)}}H_0^2}
\]
and from Section \ref{sec:2D} we know that $q_2(t)-q_3(t)\to \infty$ (since Theorem \ref{thm:estimmates2D} applies). Then $e^{q_3(t)-q_2(t)}\to 0$ and consequently $p_2(t)$ and $A(t)$ both converge to $\frac{1}{2}H_0+\frac{1}{2}\sqrt{4H_1-H_0^2}$ which  has to be positive since $p_2$ is positive by assumption.
\end{proof}

\subsection{Sectional curvature for 3-peakons}\label{sec:curvature3D}
In the sequel we shall need estimates for the sectional curvature.
First we compute the components of the (covariant) Riemann tensor of $(\Omega, g)$: $R_{ijkl}=g(R(\partial_k,\partial_l)\partial_j,\partial_i)$, where $R$ is the Riemann $(3,1)$-tensor. We skip the details of computations as they are lengthy and not illuminating.  As an outcome we get that all non-zero components are as follows up to the relations $R_{ijkl}=R_{klij}=-R_{jikl}$:
\[
R_{1212}=\frac{\left(3 {{e}^{3 q_3+2 q_2}}+2 {{e}^{2 q_3+ 3 q_2}}-2 {{e}^{q_3+4 q_2}}-2 {{e}^{5 q_2}}\right){{e}^{2 q_1}}+\left(-{{e}^{2 q_3+2q_2}}+{{e}^{q_3+3q_2}}+{{e}^{4 q_2}}\right){{e}^{3 q_1}}}{(e^{q_1}+e^{q_2})\Delta_1},
\]
\[
R_{2323}=\frac{\left(3 {{e}^{5 q_2}+2 {{e}^{4 q_2+q_1}}-2 {{e}^{3 q_2+2 q_1}}-2 {{e}^{2 q_2+3 q_1}}}\right)  {{e}^{2 q_3}}+\left( -{{e}^{5 q_2+q_1}}+{{e}^{4 q_2+2 q_1}}+{{e}^{3 q_2+3 q_1}}\right)  {{e}^{q_3}}}{(e^{q_2}+e^{q_3})\Delta_1},\]
\[
R_{1313}=\frac{{{e}^{q_3+3 q_2+2 q_1}}}{\Delta_1},\quad
R_{1213}=-\frac{{{e}^{2 q_3+2 q_2+2 q_1}}}{\Delta_1},\quad
R_{1223}=\frac{{{e}^{2 q_3+3 q_2+q_1}}}{\Delta_1},\quad
R_{1323}=-\frac{{{e}^{q_3+4 q_2+q_1}}}{\Delta_1},
\]
where
\[
\Delta_1=(e^{q_2}-e^{q_1})(e^{q_3}-e^{q_2})(e^{q_2}+e^{q_1})^2(e^{q_3}+e^{q_2})^2.
\]

Further, in order to compute sectional curvature $\kappa_\sigma$ of $\sigma$, a two-dimensional submanifold  of $(\Omega, g)$, whose tangent space is spanned by $X_a=(a_1,a_2,a_3)$ and $X_b=(b_1,b_2,b_3)$, we take
\begin{equation}\label{eq:seccurv1}
\kappa_\sigma=\frac{1}{g(X_a,X_a)g(X_b,X_b)-g(X_a,X_b)^2}\sum_{i,j,k,l=1}^3R_{ijkl}a_ia_kb_jb_l,
\end{equation}
where explicitly
\begin{equation}\label{eq:seccurv2}
\begin{aligned}
g(X_a,X_a)&g(X_b,X_b)-g(X_a,X_b)^2=\frac{{e}^{2(q_2+q_1)}}{\Delta_2} (2({a_1}{b_2}-{a_2} {b_1})({a_2}{b_3}-{a_3} {b_2}) {{e}^{q_3-q_1}}\\
&+2({a_1}{b_2}-{a_2} {b_1})({a_3}{b_1}-{a_1} {b_3}) {{e}^{q_3-q_2}}+2({a_2}{b_3}-{a_3} {b_2})({a_3}{b_1}-{a_1} {b_3}) {{e}^{q_2-q_1}}\\
&+(({a_1}{b_2}-{a_2} {b_1})^2+({a_2}{b_3}-{a_3} {b_2})^2+({a_3}{b_1}-{a_1} {b_3})^2)
\end{aligned}
\end{equation}
with
\[
\Delta_2=\left( {{e}^{q_2}}-{{e}^{q_1}}\right) \left( {{e}^{q_3}}-{{e}^{q_2}}\right) \left( {{e}^{q_2}}+{{e}^{q_1}}\right) \left( {{e}^{q_3}}+{{e}^{q_2}}\right).
\]
In what follows we shall use the known fact concerning the bound from above on the quotient of two quadratic forms. We give the proof for completeness and reader's convenience.
\begin{prop}\label{prop:prep}
Let $A$ and $B$ be symmetric $n\times n$ matrices. Moreover, assume $B$ to be positively defined (in particular invertible) matrix.
Take $\lambda_{max}$ the largest eigenvalue of $B^{-1}A$. Then for any $\zeta\neq 0$ we have
\[
\frac{\langle A\zeta, \zeta\rangle}{\langle B\zeta, \zeta\rangle}\leq \lambda_{max}\;.
\]
\end{prop}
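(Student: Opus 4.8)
The plan is to reduce the generalized Rayleigh quotient on the left-hand side to an ordinary Rayleigh quotient by symmetrizing with the square root of $B$. Since $B$ is symmetric and positive definite, it admits a unique symmetric positive definite square root $B^{1/2}$, which is invertible with inverse $B^{-1/2}$. I would then substitute $\eta=B^{1/2}\zeta$; as $\zeta$ ranges over the nonzero vectors so does $\eta$, and one computes $\langle B\zeta,\zeta\rangle=\langle\eta,\eta\rangle$ together with $\langle A\zeta,\zeta\rangle=\langle\tilde A\eta,\eta\rangle$, where $\tilde A:=B^{-1/2}AB^{-1/2}$. Thus the quotient in question equals the ordinary Rayleigh quotient $\langle\tilde A\eta,\eta\rangle/\langle\eta,\eta\rangle$ of the matrix $\tilde A$.

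The next step is to observe that $\tilde A$ is symmetric, being of the form $S^{T}AS$ with $S=B^{-1/2}$ symmetric and $A$ symmetric. By the spectral theorem its Rayleigh quotient is bounded above by its largest eigenvalue, so $\langle\tilde A\eta,\eta\rangle/\langle\eta,\eta\rangle\le\mu_{max}$ for every $\eta\neq0$, where $\mu_{max}$ is the top eigenvalue of $\tilde A$. It then remains to identify $\mu_{max}$ with $\lambda_{max}$, the largest eigenvalue of $B^{-1}A$. This follows because $\tilde A$ and $B^{-1}A$ are similar: one has $B^{-1}A=B^{-1/2}\tilde AB^{1/2}$, so the two matrices share the same characteristic polynomial and hence the same spectrum; in particular $\mu_{max}=\lambda_{max}$, and, as a by-product, the eigenvalues of $B^{-1}A$ are all real. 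Combining these observations yields the claimed bound.

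There is no serious obstacle in this argument; the only points requiring care are the routine facts that $B^{1/2}$ exists and is invertible, both guaranteed by the positive definiteness of $B$, and the similarity relation connecting the spectrum of the possibly non-symmetric matrix $B^{-1}A$ to that of its symmetric counterpart $\tilde A$. This last point is the conceptual heart of the matter, since the statement is phrased in terms of $B^{-1}A$ whereas the Rayleigh-quotient bound is naturally available only for symmetric matrices, and it is precisely the conjugation by $B^{1/2}$ that bridges the two.
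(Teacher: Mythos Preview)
Your argument is correct. It differs from the paper's proof, which takes a direct variational route: the authors view $\Phi(\zeta)=\langle A\zeta,\zeta\rangle/\langle B\zeta,\zeta\rangle$ as a smooth function on $\RR^n\setminus\{0\}$, observe that its maximum is attained (by homogeneity it suffices to restrict to the unit sphere), set $\nabla\Phi=0$, and show that the resulting equation $A\zeta\langle B\zeta,\zeta\rangle=B\zeta\langle A\zeta,\zeta\rangle$ forces $\zeta$ to be an eigenvector of $B^{-1}A$ with eigenvalue exactly $\Phi(\zeta)$. Your approach instead symmetrizes via $B^{1/2}$, reducing the generalized quotient to the ordinary Rayleigh quotient of $\tilde A=B^{-1/2}AB^{-1/2}$ and then invoking the similarity $B^{-1}A=B^{-1/2}\tilde A B^{1/2}$ to identify spectra. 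The paper's method is more hands-on and avoids introducing the square root of $B$, while yours is slicker and yields for free that the eigenvalues of the non-symmetric matrix $B^{-1}A$ are real---a fact the paper's argument establishes only implicitly, since each critical value $\Phi(\zeta)$ is manifestly real.
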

\begin{proof}
We shall find the maximum of $\Phi(\zeta)=\frac{\langle A\zeta, \zeta\rangle}{\langle B\zeta, \zeta\rangle}$ on $\RR^n\setminus\{0\}$. Note that $\Phi$ is well defined in this domain because $B$ is positively defined. Moreover, the maximum exists in $\RR^n\setminus\{0\}$ as it equals the maximum of the function restricted to the unit sphere. In order to find $\zeta$ maximizing $\Phi$ we look for zeros of the gradient  $\nabla \Phi$. We get the following system of equations
\[
\sum_{j=0}^n\left(a_{ij}\zeta_j\langle B\zeta,\zeta\rangle-b_{ij}\zeta_j\langle A\zeta,\zeta\rangle\right)=0,\qquad i=1,\ldots,n,
\]
where $A=(a_{ij})_{i,j=1,\ldots,n}$ and $B=(b_{ij})_{i,j=1,\ldots,n}$. The system can be rewritten in the matrix form as
\[
A\zeta\langle B\zeta,\zeta\rangle-B\zeta\langle A\zeta,\zeta\rangle=0.
\]
Since $B$ is invertible we get
\[
B^{-1}A\zeta=\lambda\zeta
\]
with $\lambda=\frac{\langle A\zeta, \zeta\rangle}{\langle B\zeta, \zeta\rangle}$. It follows that $\zeta$ is an eigenvector of $B^{-1}A$ and the corresponding eigenvalue equals $\Phi(\zeta)$. Conversely, if $\zeta$ is an eigenvector corresponding to an eigenvalue $\lambda$ then $\lambda$ necessarily equals $\Phi(\zeta)$. Indeed
\[
\Phi(\zeta)=\frac{\langle A\zeta, \zeta\rangle}{\langle B\zeta, \zeta\rangle}=\frac{\langle BB^{-1}A\zeta, \zeta\rangle}{\langle B\zeta, \zeta\rangle}=\lambda\frac{\langle B\zeta, \zeta\rangle}{\langle B\zeta, \zeta\rangle}=\lambda.
\]
It follows that maximum of $\Phi$ is attained for $\zeta$ being an eigenvector of $B^{-1}A$ corresponding to the maximal eigenvalue.
\end{proof}

We are now in a position to state and prove a claim concerning the bound of sectional curvatures of $2$-dimensional submanifolds of $(\Omega,g)$, independent on the choice of vectors spanning them.
\begin{prop}\label{prop:curvature3D}
Consider any $2$-dimensional
subspace $\sigma$ of the tangent bundle of the 3-dimensional metric $g$. The sectional curvature $\kappa_\sigma$ satisfies
\[
\kappa_\sigma<\frac{1}{4}.
\]
Moreover $\kappa_\sigma$ tends to 0 as $q_1-q_2\to\infty$ and $q_2-q_3\to \infty$.
\end{prop}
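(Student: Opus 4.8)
The plan is to turn \eqref{eq:seccurv1} into a genuine Rayleigh quotient of two quadratic forms in the bivector $X_a\wedge X_b$ and then invoke Proposition \ref{prop:prep}. First I would record that, by the symmetries $R_{ijkl}=R_{klij}=-R_{jikl}$, the numerator $\sum_{i,j,k,l}R_{ijkl}a_ia_kb_jb_l$ depends on $X_a,X_b$ only through the three $2\times2$ minors
\[
m_{12}=a_1b_2-a_2b_1,\qquad m_{23}=a_2b_3-a_3b_2,\qquad m_{31}=a_3b_1-a_1b_3,
\]
because the antisymmetry in the first and in the last index pair kills the symmetric parts of $a_ib_j$ and $a_kb_l$; concretely $\sum R_{ijkl}a_ia_kb_jb_l=\tfrac14\sum R_{ijkl}m_{ij}m_{kl}$. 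Collecting $\zeta=(m_{12},m_{23},m_{31})$ and reading off the coefficients from the nonzero $R_{ijkl}$ (with the signs coming from $m_{13}=-m_{31}$), the numerator equals $\langle A\zeta,\zeta\rangle$ with
\[
A=\begin{pmatrix} R_{1212} & R_{1223} & -R_{1213}\\ R_{1223} & R_{2323} & -R_{1323}\\ -R_{1213} & -R_{1323} & R_{1313}\end{pmatrix},
\]
while the right-hand side of \eqref{eq:seccurv2} is exactly $\langle B\zeta,\zeta\rangle$ with $B=\tfrac{e^{2(q_1+q_2)}}{\Delta_2}\,\tilde B$ and
\[
\tilde B=\begin{pmatrix} 1 & e^{q_3-q_1} & e^{q_3-q_2}\\ e^{q_3-q_1} & 1 & e^{q_2-q_1}\\ e^{q_3-q_2} & e^{q_2-q_1} & 1\end{pmatrix}.
\]
Thus $\kappa_\sigma=\langle A\zeta,\zeta\rangle/\langle B\zeta,\zeta\rangle$.

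Next I would observe that $B$ is positive definite: in dimension $3$ every nonzero $\zeta$ is the bivector $X_a\wedge X_b$ of two independent vectors (the Pl\"ucker relations are vacuous), so $\langle B\zeta,\zeta\rangle$ equals the Gram determinant $g(X_a,X_a)g(X_b,X_b)-g(X_a,X_b)^2$, which is strictly positive for independent $X_a,X_b$; hence $B\succ0$. Proposition \ref{prop:prep} then gives $\kappa_\sigma\le\lambda_{\max}(B^{-1}A)$ uniformly in $\sigma$, so it suffices to prove the single symmetric-matrix inequality $\tfrac14B-A\succ0$ on $\Omega$, which I would verify by Sylvester's criterion.

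To make the algebra manageable I would pass to $x=e^{q_1}>y=e^{q_2}>z=e^{q_3}>0$ and multiply $\tfrac14B-A$ by the positive factor $(x+y)(y+z)\Delta_1$, clearing all denominators (note $\Delta_1=\Delta_2(x+y)(y+z)$, with $\Delta_1,\Delta_2>0$ on $\Omega$). This reduces the claim to positivity of the three leading principal minors of a matrix with polynomial entries, to be checked under the constraints $x>y>z>0$. A useful structural simplification is that $\tilde B$ has ``Markov'' form: its entries satisfy $e^{q_3-q_1}=e^{q_3-q_2}e^{q_2-q_1}$, so after reordering the indices as $(1,3,2)$ the matrix $\tilde B$ becomes an AR$(1)$ correlation matrix whose inverse is tridiagonal, which streamlines the $3\times3$ determinant. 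I expect the positivity of the full $3\times3$ minor to be the main obstacle, since essentially all of the computation sits there; the realistic route is a computer-algebra check, or a factorization exhibiting that minor as a sum of monomials that are manifestly positive on $\{x>y>z>0\}$.

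Finally, for the decay I would examine the limit $q_1-q_2\to\infty$, $q_2-q_3\to\infty$, i.e. $y/x\to0$ and $z/y\to0$. There the off-diagonal entries of $\tilde B$ tend to $0$, so $\tilde B\to I$, while $\tfrac{e^{2(q_1+q_2)}}{\Delta_2}\to1$ (since $\Delta_2\sim x^2y^2$); hence $B\to I$. Substituting the same asymptotics into the explicit components, using $\Delta_1\sim x^3y^3$, shows that every entry of $A$ tends to $0$; for instance $R_{1313}\sim z/x$, $R_{1212}\sim y/x$, $R_{2323}\sim z/y$, and the off-diagonal entries $R_{1213},R_{1223},R_{1323}$ vanish likewise. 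Therefore $B^{-1}A\to0$, so all of its eigenvalues vanish in the limit, and since $\kappa_\sigma$ is squeezed between $\lambda_{\min}(B^{-1}A)$ and $\lambda_{\max}(B^{-1}A)$, it tends to $0$ uniformly in $\sigma$.
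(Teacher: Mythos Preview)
Your reduction to a Rayleigh quotient $\kappa_\sigma=\langle A\zeta,\zeta\rangle/\langle B\zeta,\zeta\rangle$ and the appeal to Proposition~\ref{prop:prep} is exactly the paper's framework; after your permutation of the components of $\zeta$, your $\tilde B$ is literally the matrix $E=(e^{-|q_i-q_j|})$ and your $A$ is the paper's $Q$ up to the positive scalar $\Delta_2/e^{2(q_1+q_2)}$. The divergence is at the last step. You propose to verify $\tfrac14 B-A\succ0$ by Sylvester's criterion and concede that the $3\times3$ minor is ``the main obstacle'' and will likely require computer algebra or a lucky factorization. That is a genuine gap: as written, the bound $\kappa_\sigma<\tfrac14$ is asserted but not established.

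The paper's route avoids the brute-force determinant. It computes the three eigenvalues of $B^{-1}A$ (equivalently $E^{-1}Q$) \emph{explicitly}, and they have strikingly simple closed forms:
\[
\lambda_1=\frac{e^{q_3-q_2}-2e^{2(q_3-q_2)}}{(1+e^{q_3-q_2})^2},\qquad
\lambda_2=\frac{e^{q_3-q_1}}{(1+e^{q_2-q_1})(1+e^{q_3-q_2})},\qquad
\lambda_3=\frac{e^{q_2-q_1}-2e^{2(q_2-q_1)}}{(1+e^{q_2-q_1})^2}.
\]
Each of $\lambda_1,\lambda_3$ is the one-variable function $u\mapsto u(1-2u)/(1+u)^2$ on $u\in(0,1)$, whose maximum is $1/12$; and $\lambda_2<1/4$ is immediate since $e^{q_1-q_3}+e^{q_1-q_2}+e^{q_2-q_3}+1>4$ on $\Omega$. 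This both proves the strict bound and, since each $\lambda_i\to0$ as $q_1-q_2,\,q_2-q_3\to\infty$, immediately yields the decay statement as well. (Your asymptotic argument via $A\to0$, $B\to I$ is correct too, just coarser.) The point you are missing is that $B^{-1}A$ diagonalizes in closed form --- the eigendirections correspond to the distributions $\D_1,\D_2,\D_3$ --- so rather than attacking $\tfrac14 B-A$ by Sylvester, compute the eigenvalues of $B^{-1}A$ directly.
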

\begin{proof}
Recall that we are in domain $\Omega$, i.e.~$q_1>q_2>q_3$.
Denote $Q_{ijkl}=R_{ijkl}\cdot\frac{\Delta_2}{{e}^{2(q_2+q_1)}}$. Direct computations show that
\[
Q_{1212}=\frac{3e^{3 q_3-q_2-2q_1}+2e^{2 q_3-2 q_1}-2e^{q_3+q_2-2q_1}-2 e^{2q_2-2q_1}-e^{2q_3-q_2-q_1}+e^{q_3-q_1}+e^{q_2-q_1}}{(1+e^{q_2-q_1})\Delta_3},
\]
\[
Q_{2323}=\frac{3e^{2 q_3+q_2-3q_1}+2e^{2 q_3-2 q_1}-2e^{2q_3-q_2-q_1}-2 e^{2q_3-2q_2}-e^{q_3+q_2-2q_1}+e^{q_3-q_1}+e^{q_3-q_2}}{(1+e^{q_3-q_2})\Delta_3},\]
\[
Q_{1313}=\frac{{{e}^{q_3-q_1}}}{\Delta_3},\quad
Q_{1213}=-\frac{{{e}^{2 q_3 - q_2-q_1}}}{\Delta_3},\quad
Q_{1223}=\frac{{{e}^{2 q_3-2q_1}}}{\Delta_3},\quad
Q_{1323}=-\frac{{{e}^{q_3+q_2-2q_1}}}{\Delta_3},
\]
where
\[
\Delta_3=(1+e^{q_2-q_1})(1+e^{q_3-q_2}).
\]

It follows that the singularity of $R_{ijkl}$, which is of the form $\left( {{e}^{q_2}}-{{e}^{q_1}}\right)^{-1} \left( {{e}^{q_3}}-{{e}^{q_2}}\right)^{-1}$, disappears in $Q_{ijkl}$. Moreover, all coefficients $Q_{ijkl}$ decay when when $q_1-q_2\to\infty$ and $q_2-q_3\to \infty$. Now, using the relations $R_{ijkl}=R_{klij}=-R_{jikl}$ one gets the formula
\begin{equation}\label{eq:seccurv3}
\begin{aligned}
\frac{\Delta_2}{{e}^{2(q_2+q_1)}}\sum_{i,j,k,l=1}^3&R_{ijkl}a_ia_kb_jb_l=\,2({a_1}{b_2}-{a_2} {b_1})({a_2}{b_3}-{a_3} {b_2}) Q_{1223}\\
&-2({a_1}{b_2}-{a_2} {b_1})({a_3}{b_1}-{a_1} {b_3}) Q_{1213}-2({a_2}{b_3}-{a_3} {b_2})({a_3}{b_1}-{a_1} {b_3}) Q_{1323}\\
&+({a_1}{b_2}-{a_2} {b_1})^2Q_{1212}+({a_2}{b_3}-{a_3} {b_2})^2Q_{2323}+({a_3}{b_1}-{a_1} {b_3})^2Q_{1313}
\end{aligned}
\end{equation}
Hence, denoting
\[
\zeta_1={a_2}{b_3}-{a_3} {b_2},\qquad \zeta_2={a_3}{b_1}-{a_1} {b_3},\qquad \zeta_3={a_1}{b_2}-{a_2} {b_1}
\]
and substituting $\zeta=(\zeta_1, \zeta_2, \zeta_3)$ in \eqref{eq:seccurv2} and \eqref{eq:seccurv3}, we get that \eqref{eq:seccurv1} is a ratio of two bilinear forms in $\zeta$. Precisely
\[
\kappa_\sigma=\frac{\langle Q\zeta,\zeta\rangle}{\langle E\zeta,\zeta\rangle},
\]
where, as before, $E=(e^{-|q_i-q_j|})$ is the inverse matrix of metric $g$ and
\[
Q=\left[
	\begin{array}{ccc}
	 Q_{2323} & -Q_{1323} & Q_{1223}\\
	 -Q_{1323} & Q_{1313} & -Q_{1213}\\
	 Q_{1223}  & -Q_{1213} & Q_{1212}
	\end{array}
\right].
\]
Applying Proposition \ref{prop:prep}, we infer that $\frac{\langle Q\zeta,\zeta\rangle}{\langle E\zeta,\zeta\rangle}$ attains its maximum for a vector $\zeta$ being an eigenvector of $E^{-1}Q$ and the maximum equals the maximal eigenvalue of $E^{-1}Q$. Indeed, we know that matrix $E$ is positively defined in $\Omega$, see \cite{CGKM} for instance, so that assumptions of Proposition \ref{prop:prep} are satisfied. Computation shows that there are three eigenvalues of $E^{-1}Q$
\[
\begin{aligned}
&\lambda_1=\frac{e^{q_3-q_2}-2e^{2q_3-2q_2}}{(1+e^{q_3-q_2})^2},\\
&\lambda_2=\frac{e^{q_3-q_1}}{(1+e^{q_2-q_1})(1+e^{q_3-q_2})},\\
&\lambda_3=\frac{e^{q_2-q_1}-2e^{2q_2-2q_1}}{(1+e^{q_2-q_1})^2}.
\end{aligned}
\]
All the eigenvalues are bounded from above by $\frac{1}{4}$. Indeed, $\lambda_1$ and $\lambda_3$ are both functions of one variable $t\in (-\infty, 0)$ of the form $\frac{e^t-2e^{2t}}{\left(1+e^t\right)^2}$. One notices that the maximal value of such a function equals $1/12$. In the case of $\lambda_2$ we estimate
\[
\lambda_2=\frac{e^{q_3-q_1}}{1+e^{q_3-q_2}+e^{q_2-q_1}+e^{q_3-q_1}}=\frac{1}{e^{q_1-q_3}+e^{q_1-q_2}+e^{q_2-q_3}+1}<1/4,
\]
the last inequality is clear, since $q_1,q_2,q_3\in \Omega$.
\end{proof}
\begin{remark}
The three eigenvalues $\lambda_1$, $\lambda_2$, $\lambda_3$ correspond to three planes $\D_1$, $\D_2$ and $\D_3$, respectively. Note that the curvature on $\D_1$ and $\D_3$ coincides with the Gaussian curvature in the 2-dimensional case.
\end{remark}

\subsection{Collisions of 3-peakons}
As an application of Proposition \ref{prop:curvature3D} we give a new geometric proof of the necessary condition for a collision.
\begin{theorem}\label{thm:main3D}
Let $u(x,t)=\sum_{i=1}^3p_i(t)e^{-|x-q_i(t)|}$ be a threepeakon solution to the Camassa--Holm equation with initial data $(q(0),p(0))$ satisfying $q(0)\in\Omega$ and $p_i(0)\neq 0$, $i=1,2,3$.  Then the necessary condition for the threepeakon to collide is
\begin{equation}\label{eq:cond3D1}
p_1(0)<0<p_2(0),
\end{equation}
or
\begin{equation}\label{eq:cond3D2}
p_2(0)<0<p_3(0).
\end{equation}
\end{theorem}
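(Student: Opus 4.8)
The plan is to prove the contrapositive: if neither \eqref{eq:cond3D1} nor \eqref{eq:cond3D2} holds, then the threepeakon does not collide. Since $p_i(0)\neq 0$, Proposition \ref{prop:zeroset} guarantees that each $p_i(t)$ keeps a fixed sign, so the excluded sign patterns of $(p_1,p_2,p_3)$ are exactly $(+,+,+)$, $(+,+,-)$, $(+,-,-)$ and $(-,-,-)$. I would first observe that the involution $u(x,t)\mapsto -u(-x,t)$ sends solutions of \eqref{eq:CH} to solutions and, in peakon variables, acts by $(q_i,p_i)\mapsto(-q_{4-i},-p_{4-i})$; it preserves $\Omega$ and the collision property, and it carries the patterns with $p_2(0)<0$, $p_3(0)<0$ to those with $p_1(0)>0$, $p_2(0)>0$. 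It therefore suffices to treat the single case $p_1(0)>0$ and $p_2(0)>0$ (covering $(+,+,+)$ and $(+,+,-)$), the other two patterns following by this symmetry.

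Fix then $p_1(0)>0$ and $p_2(0)>0$. A collision point in $\Omega$ lies either on the face $\{q_1=q_2>q_3\}$, on the face $\{q_1>q_2=q_3\}$, or on the edge $\{q_1=q_2=q_3\}$. To rule out at once every collision involving $q_1=q_2$ (the first face together with the edge) I would exploit the defining function of the integrable distribution $\D_1$. Writing $f=e^{q_1}-e^{q_2}$, whose level sets are the leaves of $\D_1$ by Proposition \ref{prop:3Ddistr}, a short computation from \eqref{eq:pq} gives $\dot f = p_1\,e^{q_1}\bigl(1-e^{-2(q_1-q_2)}\bigr)$, which has the sign of $p_1$. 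On $\Omega$ one has $1-e^{-2(q_1-q_2)}>0$, so $f$ is strictly increasing and $f(t)\ge f(0)>0$ for all $t$. Geometrically this says that the geodesic crosses the leaves of $\D_1$ monotonically and is pushed away from $\{q_1=q_2\}$, exactly as the barrier curve $\A$ behaves in the twopeakon proof of Theorem \ref{thm:collisions2D}. Since a collision with $q_1=q_2$ at a finite point would force $f\to 0$, no such collision can occur.

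The only remaining possibility is a binary collision on $\{q_1>q_2=q_3\}$, and this is the main obstacle, precisely because the distribution $\D_2$ is non-integrable (Proposition \ref{prop:3Ddistr}): it admits no global barrier surface and no monotone first integral to play the role of $f$, so the simple argument of the previous paragraph is unavailable. Here I would argue by contradiction using the collision asymptotics of \cite[Lemma 4.1]{K} already quoted in Lemma \ref{lemma:D2}: as the geodesic approaches such a point, $p_1$ stays bounded while $p_2+p_3$ is bounded and $|p_2-p_3|\to\infty$. Setting $w=q_2-q_3$, one computes $\dot w=(1-e^{-w})\bigl(p_1e^{-(q_1-q_2)}+p_2-p_3\bigr)$. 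Along a genuine collision $w\to 0^+$, so that $\dot w<0$ for $t$ near the collision time; since $1-e^{-w}>0$ and $p_1e^{-(q_1-q_2)}$ remains bounded, the blowing-up bracket must be negative, forcing $p_2-p_3\to-\infty$ and hence $p_2\to-\infty$. This contradicts $p_2(0)>0$. Thus no $q_2=q_3$ collision occurs either, which settles the case $p_1(0)>0$, $p_2(0)>0$ and, together with the symmetry reduction, proves the theorem.
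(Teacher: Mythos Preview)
Your argument is essentially correct and takes a genuinely different route from the paper's. The paper never computes the monotone quantity $f=e^{q_1}-e^{q_2}$; instead it builds two geodesic barrier surfaces through $q^0$ --- the integral leaf $N_1$ of $\D_1$ and the exponential image $N_2=\exp_{q^0}(\D_2)$ --- argues that a geodesic heading toward $\Sigma_1=\{q_1=q_2\}$ with $p_1>0$ (resp.\ $p_2<0$) would have to re-intersect $N_1$ (resp.\ $N_2$), producing a conjugate time, and then invokes the sectional-curvature bound of Proposition~\ref{prop:curvature3D} together with the Rauch comparison Theorem~\ref{thm:rauch1} to obtain a uniform lower bound on the spacing of conjugate times, contradicting a finite collision time. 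Your proof replaces all of this Riemannian machinery by two elementary observations: the explicit first integral of the foliation $\D_1$ gives a strictly increasing $f$ whenever $p_1>0$, disposing of every $q_1=q_2$ collision (including the triple edge) in one line; and for the remaining face $\{q_2=q_3,\ q_1>q_2\}$ you read off the sign of $p_2-p_3$ directly from the ODE for $w=q_2-q_3$ and the blow-up asymptotics of \cite[Lemma~4.1]{K}. The paper's approach is in keeping with its declared programme of exploiting curvature and comparison geometry; yours is shorter, requires no curvature computation, and makes transparent why the integrable distribution $\D_1$ behaves like the barrier $\A$ of Section~\ref{sec:2D} while the non-integrable $\D_2$ must be handled by asymptotics rather than a foliation argument.

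One small point deserves tightening. In the $q_2=q_3$ step you write ``$w\to 0^+$, so that $\dot w<0$'' and then invoke boundedness of $p_1$; neither is quite what you need, and the second is not contained in the statement of \cite[Lemma~4.1]{K} as quoted in Lemma~\ref{lemma:D2}. A cleaner way, which avoids any appeal to boundedness of $p_1$, is: since $p_2>0$ has fixed sign and $p_2+p_3$ is bounded, the alternative $p_3>0$ would force both $p_2,p_3$ bounded, contradicting $|p_2-p_3|\to\infty$; hence $p_3<0$ and $p_2-p_3>0$ throughout, so $p_2-p_3\to+\infty$. Because $p_1>0$ as well, the bracket in $\dot w=(1-e^{-w})\bigl(p_1e^{-(q_1-q_2)}+p_2-p_3\bigr)$ is eventually positive, hence $\dot w>0$ near $t^*$, which is incompatible with $w(t)\to 0^+$. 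This yields the same contradiction without asserting that $p_1$ stays bounded.
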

\begin{proof}
We shall prove that condition \eqref{eq:cond3D1} is necessary for a collision of $q_1$ with $q_2$. Analogously, condition \eqref{eq:cond3D2} is necessary for a collision $q_2=q_3$. Consequently, if neither \eqref{eq:cond3D1} nor \eqref{eq:cond3D2} hold, then there is no collision. We focus on proving necessity of \eqref{eq:cond3D1}, the other case is parallel.

Let $q^0\in\Omega$ be the initial point of a geodesic $t\mapsto q(t)$, i.e. $q(0)=q^0$. Our aim is to prove that if $p_1>0$ or $p_2<0$, then $q(t)$ never hits the halfplane $\Sigma_1=\{q_1=q_2,\ q_3<q_1\}$, which is the singular set in question. We consider two submanifolds of $\Omega$ that pass through $q^0$. $N_1$ is the integral leaf of $\D_1$ and $N_2$ is defined as $\exp_{q^0}(\D_2)$, where $\exp_{q^0}\colon T_{q^0}\Omega\to\Omega$ is the exponential mapping of metric $g$, that sends a tangent vector $V\in T_{q^0}\Omega$ to a point $\gamma(1)$, where $\gamma$ is the unique geodesic starting at $q^0$ with the tangent vector $V$. Note that $\exp_{q^0}(\D_1)$ coincides with $N_1$, since leaves of $\D_1$ are totally geodesic with respect to the metric $g$.

Now, according to Proposition \ref{prop:3Ddistr}, $N_1$ splits $\Omega$ into two parts and if $p_1>0$, then the tangent vector to the geodesic $t\mapsto q(t)$ is directed into the sector that is separated from $\Sigma_1$ by $N_1$.

Similarly, $N_2$ splits $\Omega$ into two parts. If $p_2=0$ then a threepeakon is actually a twopeakon and, due Theorem \ref{thm:collisions2D}, horizontal geodesics of $\D_2$ hit the singularity $\{q_1=q_3\}$ at  a finite time. Moreover, Lemma \ref{lemma:D2} implies that  $N_2$ joins $q^0$ with the boundary $\partial\Sigma_1=\{q_1=q_2=q_3\}$. Again, if $p_2<0$, then the tangent vector to the geodesic $t\mapsto q(t)$ is directed into the sector that is separated from $\Sigma_1$ by $N_2$. Hence, in both cases, if $t\mapsto q(t)$ hits $\Sigma_1$ at certain time $t^*$ (a collision time), then before it happens, say at $t_1<t^*$, it must hit $N_1$ or $N_2$, respectively. By definition of $N_i$'s, $t_1$ would be a conjugate time for the geodesic. Moreover, at time $t_1$ either $p_1>0$ or $p_2<0$, as at the initial point, because the conditions $p_i=0$ are preserved by the geodesic flow of $g$, and consequently $p_i$'s cannot change signs. Therefore, the reasoning can be repeated and we get a sequence of conjugate times $t_1<t_2<t_3<\ldots<t^*$. However, there is a lower bound on the difference $t_{i+1}-t_i$ that follows from the Rauch comparison Theorem \ref{thm:rauch1}, because there is an upper bound on the sectional curvature of $g$ (Proposition \ref{prop:curvature3D}). This implies that $t^*$ cannot be finite.

\end{proof}

\noindent
{\bf Acknowledgement.} T.C. was partially supported by the National Science Centre grant
SONATA BIS 7 number UMO-2017/26/E/ST1/00989. W.K. was supported by the grant 2019/34/E/ST1/00188 from the National Science Centre, Poland.

\end{document}